\newtheorem{theorem}{Theorem}[section]
\newtheorem{proposition}[theorem]{Proposition}
\newtheorem{corollary}[theorem]{Corollary}
\newtheorem{lemma}[theorem]{Lemma}
\newtheorem{remark}[theorem]{Remark}
\numberwithin{equation}{section}
\DeclareMathOperator{\supp}{supp\,}
\DeclareMathOperator{\sgn}{sgn\,}
\DeclareMathOperator{\loc}{loc}
\begin{document}
\title{Optimal function spaces for continuity of the Hessian determinant as a distribution}
\date{\today}
\thanks{The work of E.B. was supported by the National Science Foundation under Award No. DMS-1204557. The work of D.J. was supported by the Bergman Trust and 
NSF grant DMS-1069225.}
\author{Eric Baer}\address{Massachusetts Institute of Technology, 77 Massachusetts Ave., Cambridge, MA 02139--4381}\email{ebaer@math.mit.edu}
\author{David Jerison}\address{Massachusetts Institute of Technology, 77 Massachusetts Ave., Cambridge, MA 02139--4381}\email{jerison@math.mit.edu}

\begin{abstract}
We establish optimal continuity results for the action of the Hessian
determinant on spaces of Besov type into the space of distributions on
$\mathbb{R}^N$.  In particular, inspired by recent work of Brezis and
Nguyen on the distributional Jacobian determinant, we show that the
action is continuous on the Besov space of fractional order
$B(2-\frac{2}{N},N)$, and that all continuity results in this scale of
Besov spaces are consequences of this result.

A key ingredient in the argument is the characterization of
$B(2-\frac{2}{N},N)$ as the space of traces of functions in the
Sobolev space $W^{2,N}(\mathbb{R}^{N+2})$ on the subspace
$\mathbb{R}^N$ of codimension $2$.  The most delicate and elaborate
part of the analysis is the construction of a counterexample to
continuity in $B(2-\frac{2}{N},p)$ with $p>N$.
\end{abstract}

\maketitle

\section{Introduction}

Fix $N\geq 2$ and consider the class of scalar-valued functions $u$ on
$\mathbb{R}^N$.  The goal of this paper is to identify when the
Hessian determinant $\det(D^2u)$ makes sense as a distribution on
$\mathbb{R}^N$.  In the case $N=2$, it is a well known consequence of
integration by parts identities that the Hessian determinant is well
defined and continuous on $W^{1,2}(\mathbb{R}^2)$ (see \cite{I}).
For $N\geq 3$, spaces of integer order no longer suffice for optimal
results --- in fact, we will show below that $u\mapsto \det(D^2u)$ is
well defined and continuous from a function space of fractional order
$2-\frac{2}{N}$ into the space of distributions.

In particular, we consider the scale of Besov spaces on
$\mathbb{R}^N$, which we denote by $B(s,p)=B_s^{p,p}$, with norm
defined below in \eqref{eq1}, and we characterize the spaces in this
scale on which the Hessian determinant acts continuously.  Indeed,
continuity of the operator corresponds to a single master theorem in
the space $B(2-\frac{2}{N},N)$: the action is continuous on this
space, and consequently on every $B(s,p)$ satisfying $B(s,p)\subseteq
B_{\loc}(2-\frac{2}{N},N)$.\footnote{Here, $B_{\loc}(s,p)$ denotes the
  space of all $f$ such that $\chi f\in B(s,p)$ for all $\chi\in
  C_c^\infty(\mathbb{R}^N)$.}  Moreover, the action is not continuous
on any other space in the scale.

Our theorem is inspired by a recent theorem of Brezis and Nguyen
\cite{BN1} characterizing the spaces $B(s,p)$ of vector-valued maps
$f:\mathbb{R}^N\rightarrow\mathbb{R}^N$ on which the Jacobian
determinant $\det(Df)$ acts continuously (see also \cite{Mi}).  They
show that the Jacobian determinant is continuous from the space
$B(1-\frac{1}{N},N)$ into the space of distributions, and that
continuity fails for any space in this scale for which the inclusion
$B(s,p)\subseteq B_{\loc}(1-\frac{1}{N},N)$ does not hold (note that
in \cite{BN1} the Besov space $B(1-\frac{1}{N},N)$ is denoted by
$W^{1-\frac{1}{N},N}$).

We now give a formal statement of our main results.  For $1<s<2$ and $1\leq p< \infty$, let $B(s,p)$ be the function space defined via the norm
\begin{align}
\lVert u\rVert_{s,p}&:=\lVert u\rVert_{W^{1,p}}+ \bigg(\int_{\mathbb{R}^N}\int_{\mathbb{R}^N} \frac{|Du(x)-Du(y)|^p}{|x-y|^{N+\sigma p}}\,dxdy\bigg)^{1/p} \label{eq1}
\end{align}
where $\sigma=s-1$ satisfies $0<\sigma<1$.  Our first result establishes $B(2-\frac{2}{N},N)$ stability of the Hessian determinant.
\begin{theorem}
\label{thm1}
Fix $N\geq 3$.  Then for all $u_1, u_2, \varphi \in C_c^2(\mathbb{R}^N)$ one has
\begin{align}
\nonumber &\bigg|\int \Big(\det(D^2u_1)-\det(D^2u_2)\Big)\varphi\, dx\bigg|\\
&\hspace{0.4in}\leq C\lVert u_1-u_2\rVert_{2-\frac{2}{N},N}(\lVert u_1\rVert_{2-\frac{2}{N},N}^{N-1}+\lVert u_2\rVert_{2-\frac{2}{N},N}^{N-1})\lVert D^2\varphi\rVert_{L^\infty(\mathbb{R}^N)}.\label{est-1}
\end{align}
\end{theorem}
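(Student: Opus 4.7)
The plan is to exploit the multilinear and divergence structure of $u\mapsto\det(D^2u)$ together with the trace characterization of $B(2-\tfrac{2}{N},N)$ as the space of restrictions to $\mathbb{R}^N$ of functions in $W^{2,N}(\mathbb{R}^{N+2})$. Since $u_1,u_2,\varphi\in C_c^2$, every quantity is initially classical, and it suffices to establish the a priori bound \eqref{est-1}. I first polarize: let
\[
T(v_1,\ldots,v_N;\varphi):=\sum_{\sigma\in S_N}\sgn(\sigma)\int_{\mathbb{R}^N}(\partial_1\partial_{\sigma(1)}v_1)\cdots(\partial_N\partial_{\sigma(N)}v_N)\,\varphi\,dx,
\]
which is $N$-linear in $(v_1,\ldots,v_N)$ with $T(u,\ldots,u;\varphi)=\int\det(D^2u)\,\varphi\,dx$. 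The standard telescoping $\det(D^2u_1)-\det(D^2u_2)=\sum_{k=1}^N T(u_2,\ldots,u_2,u_1-u_2,u_1,\ldots,u_1;\cdot)$ (with $u_1-u_2$ in the $k$-th slot) reduces \eqref{est-1} to the multilinear bound
\[
|T(v_1,\ldots,v_N;\varphi)|\le C\,\lVert D^2\varphi\rVert_{L^\infty}\,\prod_{j=1}^N\lVert v_j\rVert_{2-\frac{2}{N},N}.
\]

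Next I integrate by parts twice via the antisymmetric structure. Writing $u_k=\partial_ku$, the identity $\det(D^2u)\,dx_1\wedge\cdots\wedge dx_N=du_1\wedge\cdots\wedge du_N$ together with the Piola identity $\sum_i\partial_i[\mathrm{cof}(D^2u)]_{ij}=0$ yields the first-order representation $\det(D^2u)=\tfrac{1}{N}\partial_i(u_j[\mathrm{cof}(D^2u)]_{ij})$. Each cofactor $[\mathrm{cof}(D^2u)]_{ij}$ is itself an $(N-1)\times(N-1)$ Jacobian-type determinant of second derivatives of $u$ and has an analogous divergence representation, so iterating and contracting with two $\epsilon$-symbols to cancel all higher-derivative terms by antisymmetry produces
\[
\det(D^2u)=\tfrac{1}{N(N-1)}\partial_{i_1}\partial_{i_2}\Bigl[(\partial_{j_1}u)(\partial_{j_2}u)\,Q^{\vec{k},\vec{l}}[D^2u]\,\epsilon^{i_1 j_1\vec{k}}\,\epsilon^{i_2 j_2\vec{l}}\Bigr],
\]
where $Q^{\vec{k},\vec{l}}$ is an explicit $(N-2)\times(N-2)$ minor of the Hessian. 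The same identity multilinearizes to $T$, so after pulling $\lVert D^2\varphi\rVert_{L^\infty}$ out of the integral, the multilinear bound reduces to the $L^1(\mathbb{R}^N)$-estimate
\[
\bigl\lVert(\partial v_a)(\partial v_b)\,Q[D^2v_{c_1},\ldots,D^2v_{c_{N-2}}]\bigr\rVert_{L^1(\mathbb{R}^N)}\le C\prod_{j=1}^N\lVert v_j\rVert_{2-\frac{2}{N},N}.
\]

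Finally, I close this $L^1$-estimate via the trace characterization: each $v_j$ extends to $V_j\in W^{2,N}(\mathbb{R}^{N+2})$ with $\lVert V_j\rVert_{W^{2,N}(\mathbb{R}^{N+2})}\le C\lVert v_j\rVert_{2-\frac{2}{N},N}$. In $\mathbb{R}^{N+2}$, Sobolev embedding gives $\partial V_j\in L^{N(N+2)/2}$ and $\partial^2V_j\in L^N$, and the Hölder count $\tfrac{2}{N(N+2)/2}+\tfrac{N-2}{N}=\tfrac{N}{N+2}$ places the lifted integrand in $L^{(N+2)/N}(\mathbb{R}^{N+2})$. A duality identity expressing the $L^1(\mathbb{R}^N)$-integral at $y=0$ as an $L^{(N+2)/N}(\mathbb{R}^{N+2})$-integral paired against a compactly supported weight in the transverse $\mathbb{R}^2$-variable (for instance via the fundamental solution of $\Delta_y$ on $\mathbb{R}^2$) then closes the bound. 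The main obstacle is precisely this last step: the Jacobian-minor cancellation in $Q$ must be exploited in the lift — without it, individual second derivatives of $V_j$ cannot be controlled in the Hölder chain — and I expect this to proceed by first verifying the identity for smooth $v_j$ and then passing to the limit using the estimate itself.
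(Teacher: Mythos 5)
Your high-level strategy is aligned with the paper's in that you polarize, exploit the divergence structure of $\det(D^2u)$, and plan to use the trace characterization of $B(2-\tfrac{2}{N},N)$ via $W^{2,N}(\mathbb{R}^{N+2})$. However, there is a genuine gap at exactly the point you flag as ``the main obstacle,'' and your reduction before that gap is already unworkable.

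The $L^1(\mathbb{R}^N)$ estimate you reduce to cannot hold. Its left side contains $N-2$ factors of $D^2v_j$ on $\mathbb{R}^N$, but $D^2v_j\in B^{-2/N,N}(\mathbb{R}^N)$ --- a space of \emph{negative} regularity that does not embed in any $L^q(\mathbb{R}^N)$. (Concretely, for $N=3$ the H\"older count would require $D^2v\in L^{9/5}(\mathbb{R}^3)$, and $B^{-2/3,3}$ does not embed there.) So no pointwise multilinear bound in terms of $\prod_j\|v_j\|_{2-2/N,N}$ is available on $\mathbb{R}^N$, cancellation or not: once you have written things in the form $(\partial v_a)(\partial v_b)\,Q[D^2v_{c_1},\dots]$ with each factor a literal derivative of some $v_j$, the cancellation in $Q$ has already been spent and the remaining object genuinely requires $D^2 v_j$ in a Lebesgue space.

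Your proposed rescue --- extending each $v_j$ to $V_j\in W^{2,N}(\mathbb{R}^{N+2})$ and replacing the $\mathbb{R}^N$ integral by an $\mathbb{R}^{N+2}$ integral against a transverse weight --- is not a ``duality identity'' you can invoke after the fact. An $L^{(N+2)/N}(\mathbb{R}^{N+2})$ function has no trace on a codimension-2 subspace; the fundamental solution of $\Delta_y$ on $\mathbb{R}^2$ gives no such formula for a nonlinear expression in the $V_j$. What is actually needed is a bespoke identity, proved by integrating by parts in the extension variables $x_{N+1}, x_{N+2}$ (two applications of the fundamental theorem of calculus and one further integration by parts in a tangential variable) \emph{before} any H\"older estimate, together with an essential cancellation between pairs of terms to remove third derivatives of $U$ that appear along the way. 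This is precisely the content of the paper's Lemma~\ref{lem1}, which converts the entire pairing $\int_{\mathbb{R}^N}\det(D^2u)\varphi\,dx$ into a sum of bulk $\mathbb{R}^{N+2}$ integrals $\int p_{i,j}(D^2U)\,\partial_{i,j}\Phi\,d\widetilde{x}$, so that the final H\"older step involves only $|D^2U|^{N}$ in $L^1(\mathbb{R}^{N+2})$ against $\|D^2\Phi\|_{L^\infty}$ and never isolates an $L^1(\mathbb{R}^N)$ quantity. Your proposal postpones this step to the end and treats it as a plausible technicality, but it is the hard part of the proof, and in the order you set things up --- first the $\mathbb{R}^N$-level $L^1$ reduction, then the lift --- it cannot be repaired.
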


Standard approximation arguments give the following corollary, which
asserts existence of the Hessian determinant as a distribution for
functions in the Besov space $B(2-\frac{2}{N},N)$.
\begin{corollary}
\label{cor1}
The operator $u\mapsto
\det(D^2u):C_c^2(\mathbb{R}^N)\rightarrow\mathcal{D}'$ can be extended
uniquely as a continuous mapping denoted $u\mapsto \mathscr{H}(u)$
from the space $B(2-\tfrac{2}{N},N)$ to the space of distributions
$\mathcal{D}'(\mathbb{R}^N)$.  Moreoever, for $u_1,u_2\in
B(2-\frac{2}{N},N)$, the estimate \eqref{est-1} holds with the left
side replaced by
\begin{align*}
|\langle \mathscr{H}(u_1)-\mathscr{H}(u_2),\varphi\rangle|.
\end{align*}
\end{corollary}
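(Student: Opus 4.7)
The plan is a standard density-and-continuity argument made possible by the quantitative multilinear estimate of Theorem~\ref{thm1}. The first step is to verify that $C_c^2(\mathbb{R}^N)$ is dense in $B(2-\frac{2}{N},N)$; this is classical for Besov spaces of fractional order with finite integrability exponent, and follows from the usual combination of convolution with a smooth approximate identity and multiplication by a sequence of cutoffs exhausting $\mathbb{R}^N$, with convergence checked directly in the norm \eqref{eq1}.

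Given $u \in B(2-\frac{2}{N},N)$, I would choose $u_n \in C_c^2$ with $u_n \to u$ in the Besov norm, and define
\begin{equation*}
\langle \mathscr{H}(u),\varphi\rangle := \lim_{n\to\infty} \int \det(D^2u_n)\varphi\,dx
\end{equation*}
for each $\varphi \in C_c^\infty(\mathbb{R}^N)$. Existence of this limit follows from Theorem~\ref{thm1} applied to the pair $(u_n, u_m)$: because $\{u_n\}$ is Cauchy and therefore bounded in $B(2-\frac{2}{N},N)$, the scalar sequence on the right is Cauchy. A second application of \eqref{est-1} comparing two different approximating sequences shows the limit is independent of the choice, so $\mathscr{H}(u)$ is a well-defined linear functional on test functions. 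Taking $u_2 \equiv 0$ in \eqref{est-1} then yields $|\langle \mathscr{H}(u),\varphi\rangle| \le C\|u\|_{2-\frac{2}{N},N}^N \|D^2\varphi\|_{L^\infty}$, confirming that $\mathscr{H}(u) \in \mathcal{D}'(\mathbb{R}^N)$ (in fact a distribution of order at most two) and furnishing uniqueness of the extension via density.

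To obtain the final inequality for $u_1, u_2 \in B(2-\frac{2}{N},N)$, I would pick approximating sequences $u_{j,n} \in C_c^2$ for $j=1,2$ and pass to the limit in \eqref{est-1}: the left-hand side converges to $|\langle \mathscr{H}(u_1)-\mathscr{H}(u_2),\varphi\rangle|$ by construction, and the right-hand side converges by continuity of the Besov norm. Applied to a convergent sequence $u_n \to u$ in $B(2-\frac{2}{N},N)$, this same estimate immediately gives $\mathscr{H}(u_n) \to \mathscr{H}(u)$ in $\mathcal{D}'(\mathbb{R}^N)$, i.e.\ continuity of the extension. There is no real obstacle beyond the density statement; the entire content of the corollary is packaged in the multilinear estimate \eqref{est-1} of Theorem~\ref{thm1}, and the rest is the familiar abstract extension principle.
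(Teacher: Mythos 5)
Your argument is exactly the ``standard approximation argument'' the paper invokes without spelling out: density of $C_c^2(\mathbb{R}^N)$ in $B(2-\frac{2}{N},N)$ combined with the quantitative estimate \eqref{est-1} to get Cauchy-ness, well-definedness independent of the approximating sequence, the limiting inequality, and continuity. This matches the paper's intended route, so no gap and no divergence in method.
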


As we mentioned above, the analogues of Theorem $\ref{thm1}$ and
Corollary $\ref{cor1}$ are well known in the case $N=2$, when the
regularity index becomes an integer, in which case the appropriate
function space is the usual Sobolev space $W^{1,2}$.

Having established our positive results concerning continuity of the Hessian determinant, we next address the question of optimality.

\begin{theorem}[Optimality in the scale $B(s,p)$]
\label{thm-opt}
Let $1<p<\infty$ and $1<s<2$ be such that $B(s,p)\not\subset B_{\loc}(2-\frac{2}{N},N)$.  Then there exist $u_k\in C_c^\infty(\mathbb{R}^N)$ and a test function $\varphi\in C_c^\infty(\mathbb{R}^N)$ such that
\begin{align}
\lVert u_k\rVert_{s,p}\rightarrow 0\quad \textrm{as}\,\, k\rightarrow\infty,\label{prop14-1}
\end{align}
and
\begin{align}
\int_{\mathbb{R}^N} \det(D^2u_k)\varphi\, dx\rightarrow\infty\quad \textrm{as}\,\, k\rightarrow\infty.\label{prop14-2}
\end{align}
\end{theorem}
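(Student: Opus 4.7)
The plan is to construct explicit counterexamples via sequences of concentrated bumps, with the choice of construction driven by how the embedding fails. The failure region $B(s,p) \not\subset B_{\loc}(2-\tfrac{2}{N},N)$ partitions exhaustively into (I) $s \geq 2-\tfrac{2}{N}$ with $s - \tfrac{N}{p} < 1-\tfrac{2}{N}$, and (II) $s < 2-\tfrac{2}{N}$. I will fix a radial profile $\phi \in C_c^\infty(B_1)$ with $c_\phi := \int |y|^2 \det(D^2\phi)(y)\,dy \neq 0$ (easily arranged by taking $\phi(x) = \eta(|x|)$ for a suitable $\eta$, since integration by parts reduces $c_\phi$ to a nonzero multiple of $\int_0^\infty r\,(\eta'(r))^N\,dr$), and a test function $\varphi \in C_c^\infty$ satisfying $\varphi(x) = |x|^2$ on a neighborhood of the origin. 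For the single rescaled bump $u(x) = \epsilon\,\phi(x/r)$, the scaling of the $B^\sigma_{p,p}$ seminorm (which dominates at small $r$ since $s > 1$) yields $\|u\|_{s,p} \simeq \epsilon\, r^{N/p - s}$, while Taylor expanding $\varphi(ry)$ about the origin and using $\int \det(D^2\phi)\,dy = 0$ (the null-Lagrangian identity) and $\int y_i \det(D^2\phi)\,dy = 0$ (radial symmetry) gives
\[
\int \det(D^2 u)\,\varphi\,dx \simeq c_\phi\,\epsilon^N\,r^{2-N}.
\]

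In case (I), I take $u_k(x) = r_k^\alpha\,\phi(x/r_k)$ with $r_k \downarrow 0$ and $\alpha$ chosen in $(s-N/p,\,1-2/N)$; this interval is nonempty precisely by the hypothesis of case (I). The preceding estimates then yield $\|u_k\|_{s,p} \simeq r_k^{\alpha + N/p - s} \to 0$ and $\int \det(D^2 u_k)\,\varphi\,dx \simeq r_k^{N\alpha + 2 - N} \to +\infty$. Case (II) cannot be handled by any single scaled bump (no admissible $\alpha$ exists when $s - N/p \geq 1-2/N$), so I will use a dense array of disjoint copies instead: take $M_k \to \infty$, set $r_k = M_k^{-1/N}$, and
\[
u_k(x) = \epsilon_k \sum_{j=1}^{M_k} \phi\bigl((x - x_j^{(k)})/r_k\bigr)
\]
with the $x_j^{(k)}$ on a cubic sublattice inside a neighborhood of the origin where $\varphi(x) = |x|^2$, separated so the supports are pairwise disjoint. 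Disjointness makes $\det(D^2 u_k)$ equal to the sum of the individual bump determinants, giving $\int \det(D^2 u_k)\,\varphi\,dx \simeq c_\phi\,\epsilon_k^N\,M_k\,r_k^{2-N} = c_\phi\,\epsilon_k^N\,M_k^{2-2/N}$. A careful accounting of the on-bump, bump-to-bump and bump-to-exterior contributions to $[\nabla u_k]_{\sigma,p}^p$ will give $\|u_k\|_{s,p} \simeq \epsilon_k\,M_k^{s/N}$, and taking $\epsilon_k = M_k^{-\beta}$ with $\beta$ in the nonempty interval $(s/N,\,(2-2/N)/N)$ closes the argument.

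The principal technical obstacle is the cross-term analysis in case (II): one must verify that the off-diagonal contributions to $\iint |\nabla u_k(x) - \nabla u_k(y)|^p / |x-y|^{N+\sigma p}\,dx\,dy$, coming both from $x,y$ in distinct bumps and from one point lying outside every support, do not exceed the on-bump order $M_k\,\epsilon_k^p\,r_k^{N-sp}$. Both such contributions reduce to estimating lattice sums of the form $\sum_{i \neq j} |x_i^{(k)} - x_j^{(k)}|^{-N-\sigma p}$; these converge because $\sigma p > 0$, and when combined with the Jacobian factor $r_k^{2N-p}$ from rescaling each bump they return precisely the order of the on-bump term. This tight cancellation between lattice geometry and bump scaling is the crux of the construction, and it is what ensures the $\|\cdot\|_{s,p}$-norm of the array is no larger than the naive per-bump estimate would suggest.
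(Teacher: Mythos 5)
Your partition of the failure region is not exhaustive, and the missing piece is exactly the hardest case. Recall from Remark~\ref{rmk-inclusion} that for $p>N$ the embedding $B(s,p)\subset B_{\loc}(2-\tfrac{2}{N},N)$ fails for all $s\le 2-\tfrac{2}{N}$, including the borderline $s=2-\tfrac{2}{N}$. Your case~(I) requires $s-\tfrac{N}{p}<1-\tfrac{2}{N}$ together with $s\ge 2-\tfrac{2}{N}$, which forces $N/p>1$, i.e.\ $p<N$; and your case~(II) requires the strict inequality $s<2-\tfrac{2}{N}$ so that the interval $\bigl(\tfrac{s}{N},\,\tfrac{1}{N}(2-\tfrac{2}{N})\bigr)$ for $\beta$ is nonempty. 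Thus the line $\{s=2-\tfrac{2}{N},\ p>N\}$ falls in neither case, and at that line your array construction collapses: the exponent balance $\epsilon_k M_k^{s/N}\to 0$ versus $\epsilon_k^N M_k^{2-2/N}\to\infty$ becomes exactly critical and no choice of $\epsilon_k$ works. This is not incidental — it is precisely the regime that forces the paper to abandon single-scale constructions entirely and build a highly lacunary superposition of atoms across scales, with a painstaking multilinear cross-term analysis (Section~\ref{section-5} and Proposition~\ref{prop-Hbound-3}), requiring a super-exponential gap $n_\ell=k^{N^{3\ell}}$ to kill the off-diagonal interactions. That endpoint case is described by the authors as ``the bulk of the paper,'' so the gap you have left is the central one.

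For the parts you do cover, the outline is sound. Your case~(I) is essentially the paper's Section~\ref{sec-opt1} scaling argument with the same radial bump and the same use of the moment conditions $\int\det(D^2\phi)\,dy=0$ and $\int y\,\det(D^2\phi)\,dy=0$ to isolate the $|y|^2$ term. Your case~(II) differs genuinely from the paper's Section~\ref{sec-prop2}: you spread $M_k$ disjoint scaled copies of a fixed bump over a lattice, whereas the paper uses a single oscillating atom $u_k=k^{-\alpha}\chi\,x_N\prod\sin^2(kx_i)$ with Hessian determinant of one sign. Both routes should close the strict-subcritical case $s<2-\frac{2}{N}$: your disjoint-support trick makes $\det(D^2u_k)$ additive across bumps without any cross-term analysis for the Hessian integral (the only multi-bump interaction you need to control is in the Gagliardo seminorm, where the lattice sum $\sum_{i\ne j}|x_i-x_j|^{-N-\sigma p}$ converges and can be made subordinate to the on-bump contribution by enlarging the bump separation by a fixed factor), while the paper's atom has exact sign control pointwise. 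But neither single-scale mechanism survives the critical exponent $s=2-\tfrac{2}{N}$ with $p>N$, and you will need an entirely new construction for it.
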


Since complete characterizations of the parameters $s$ and $p$ for
which $B(s,p)\subset B_{\loc}(2-\frac{2}{N},N)$ are well known (see
Remark $\ref{rmk-inclusion}$ below), the assertions of Theorem
$\ref{thm1}$ and Theorem $\ref{thm-opt}$ correspond to an explicit
characterization of the continuity properties in the scale $B(s,p)$.

Having stated our main results, let us now describe the structure of
the arguments involved.  The proof of Theorem $\ref{thm1}$, given in
Section $\ref{sec-thm1}$, is inspired by \cite{BN1}, in
which the analogous theorem for the Jacobian determinant is proved
using an extension of $f:\mathbb{R}^N\rightarrow\mathbb{R}^N$ to a
function on $\mathbb{R}^{N+1}$ and a corresponding restriction or
trace theorem.  It turns out that the optimal results for the Hessian
require an extension of the scalar function $u$ to $\mathbb{R}^{N+2}$
(see Lemma $\ref{lem1}$).  Indeed, we can identify the space
$B(2-\frac{2}{N},N)$ as a natural candidate for a single ``master
function space'' by looking at known optimal continuity results in the
scale of spaces $W^{1,q}\cap W^{2,r}$ (see \cite{FM} and \cite{DM}).
In retrospect, one can also predict the numerology from the work of
P. Olver \cite{O} concerning higher order operators and integer-order
function spaces.

In analogy with \cite{BN1}, the statement of Theorem $\ref{thm1}$
immediately gives several corollaries; in particular, an appeal to
interpolation inequalities shows that continuity in
$B(2-\frac{2}{N},N)$ implies that the Hessian determinant, interpreted
as a distribution, is continuous in each of the spaces
$W^{1,q}(\mathbb{R}^N)\cap W^{2,r}(\mathbb{R}^N)$, with
$1<q,r<\infty$, $\frac{2}{q}+\frac{N-2}{r}=1$, $N\geq 3$.  Indeed,
such results were already known, with attention also paid to the cases
in which various notions of weak convergence suffice; see
\cite{DM,FM,I,DGG} and the references cited therein.  Note that
endpoint issues can be somewhat delicate: in particular, the result of
Theorem $\ref{thm1}$ does not establish continuity in the space
$W^{1,\infty}(\mathbb{R}^3)\cap W^{2,1}(\mathbb{R}^3)$ --- however,
the relevant continuity result does hold at this endpoint by arguments
of \cite{DM} and \cite{FM}.

We now turn to the proof of the $B(s,p)$ optimality result, Theorem
$\ref{thm-opt}$, which is contained in sections $\ref{sec-opt1}$,
$\ref{sec-prop2}$, and $\ref{section-5}$.  The analogous example of
failure of continuity in the work of Brezis-Nguyen is the result of an
elaborate construction: a sum of well-chosen atoms, scaled
at lacunary frequencies.  Our construction is even more
involved: we begin by identifying a suitable class of atoms of the
form
\begin{align*}
u(x)=x_N\prod_{i=1}^{N-1} g(x_i), \quad g:\mathbb{R}\rightarrow\mathbb{R},
\end{align*}
for which the Hessian determinant has a uniform sign on $\{x:x_N>0\}$
We then consider a sum of these
atoms rescaled at lacunary frequencies, and our task is to establish
blowup for the Hessian determinant of the sum in the sense of
distributions.  It is in this step that the essential complications
arise.  To estimate the Hessian determinant appropriately, we must
bound interactions between terms in the sum, and this means that we
must keep track of cancellation and reinforcement of Fourier modes in
$N$-multilinear expressions.  Indeed, to obtain the cancellations
required to complete the argument, we must use a lacunary sequence
that is much more sparse than exponential (see \eqref{eq-n-ell}).

\begin{remark}
\label{rmk-inclusion}
To interpret the assertions of Theorem $\ref{thm1}$, Corollary
$\ref{cor1}$, and Theorem $\ref{thm-opt}$, we recall the embedding
properties of the scale of spaces $B(s,p)$ (with $1<s<2$,
$1<p<\infty$) into the space $B_{\loc}(2-\frac{2}{N},N)$ (see
\cite{S2,Tr,BN1}):
\begin{enumerate}
\item[(a)] for $s+\frac{2}{N}>1+\max\{1,\frac{N}{p}\}$, the embedding
  $B(s,p)\subset B_{\loc}(2-\frac{2}{N},N)$ holds;
\item[(b)] for $s+\frac{2}{N}<1+\max\{1,\frac{N}{p}\}$, the embedding fails;
\item[(c)] for $s+\frac{2}{N}=1+\max\{1,\frac{N}{p}\}$, there are two sub-cases:
\begin{enumerate}
\item[(i)] if $p\leq N$, then the embedding $B(s,p)\subset B_{\loc}(2-\frac{2}{N},N)$ holds; while
\item[(ii)] if $p>N$, the embedding fails.
\end{enumerate}
\end{enumerate}
\end{remark}

In \cite{BN1}, Brezis and Nguyen obtain several additional results for
the Jacobian determinant (see also \cite{IM}).  In our analogous
context, we have obtained results which (i) recover classical weak
convergence results in the spaces $W^{1,q}\cap W^{2,r}$ for suitable
$1\leq q,r\leq\infty$ in a quantitative form, and (ii) address the
question of how weak the norm can be taken on the difference
$u_1-u_2$.  We plan to return to this issue in a subsequent work
\cite{BJ2}.

In \cite{O}, P. Olver considers higher-order notions of Jacobian
determinants and the associated minors, and studies stability
properties with respect to weak convergence on (integer-order) Sobolev
spaces for these operators.  Olver defines the notion of an $m$th
order Jacobian determinant of degree $r$ on the space $W^{m-\lfloor
  \frac{m}{r}\rfloor,\gamma}\cap W^{m-\lfloor
  \frac{m}{r}\rfloor-1,\delta}$, for suitable choices of $1\leq
\gamma,\delta\leq \infty$.  We expect that there are fractional
versions of his results, for instance that the $m$th order Jacobian
determinant of degree $r$ should be continuous from the space
$B(m-\frac{m}{r},r)$ into the space of distributions.  Other
generalized notions of the Hessian determinant are considered in
\cite{FM, F, Je, JJ, Ju}.

\subsection{Organization of the paper}

We conclude this introduction by outlining the structure of the
remainder of the paper.  In Section $2$, we give the proof of Theorem
$\ref{thm1}$, the positive result concerning distributional stability
of the Hessian determinant.  We then turn to the question of
optimality in Sections $3$, $4$ and $5$.  In Section $3$ we use
scaling arguments to establish Theorem $\ref{thm-opt}$ in the case
$p\geq N$ with $s+\frac{2}{N}<1+\frac{N}{p}$.  In Section $4$, we
establish Theorem $\ref{thm-opt}$ in the case $p>N$,
$s<2-\frac{2}{N}$ by constructing an explicit example, which we call an atom.
In Section $5$, which comprises the bulk of the paper, we establish
Theorem $\ref{thm-opt}$ in the remaining case $p>N$,
$s=2-\frac{2}{N}$, using a lacunary sum of atoms.  
The atoms are the same as in Section $4$.  In Appendix A, we recall 
some divergence and integration by parts formulas required in our 
proof.  In Appendix B we review the proof of the appropriate 
codimension two Besov extension lemma we require.  In Appendix C 
we give a more direct and explicit proof of the cancellations 
needed in the lacunary construction in dimension $N=3$.

\section{Proof of Theorem \ref{thm1}}
\label{sec-thm1}

We begin the proof with a lemma expressing the action of the Hessian 
determinant as a distribution in terms of an extension.
\begin{lemma}
\label{lem1}
Fix $N\geq 2$ and let $u,\varphi\in C_c^2(\mathbb{R}^N)$ be given.  Then there exists a collection 
\begin{align*}
(p_{i,j}:1\leq i,j\leq N+2)
\end{align*}
of homogeneous polynomials of degree $N$ in $(N+2)^2$ variables (arranged in an $(N+2)\times (N+2)$ matrix) such that for every pair of extensions 
\begin{align*}
U\in C_c^2(\mathbb{R}^N\times [0,1)\times [0,1))\quad \textrm{of}\quad u
\end{align*}
and 
\begin{align*}
\Phi\in C^2_c(\mathbb{R}^N\times [0,1)\times [0,1))\quad \textrm{of}\quad \varphi
\end{align*}
we have the identity
\begin{align}
\int_{\mathbb{R}^N} \det(D^2u)\varphi \,dx &=\sum_{i=1}^{N+2}\sum_{j=1}^{N+2} \int_{\mathbb{R}^{N+2}} p_{i,j}(D^2U(\widetilde{x}))(\partial_{i,j}\Phi)(\widetilde{x})d\widetilde{x}.\label{identity-1}
\end{align}
where $\widetilde{x}=(x,x_{N+1},x_{N+2})\in \mathbb{R}^{N+2}$.
\end{lemma}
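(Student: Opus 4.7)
My plan is to combine the classical null-Lagrangian structure of $\det(D^2 u)$ on $\mathbb{R}^N$ with a two-dimensional fundamental-theorem-of-calculus lift to $\mathbb{R}^{N+2}$, and then use iterative integration by parts to force the integrand into the required form. The extension variables $x_{N+1},x_{N+2}$ provide exactly enough ``room'' to absorb the two extra derivatives needed to convert the scalar factor $u$ (which appears in the null-Lagrangian form of the Hessian determinant) into an entry of the Hessian $D^2 U$.

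Concretely, writing $\mathrm{Cof}_{kl}(D^2u)$ for the $(k,l)$-cofactor of the $N\times N$ Hessian, I begin from the identity $N\det(D^2u)=\sum_{k,l=1}^N\partial_k\partial_l[u\cdot \mathrm{Cof}_{kl}(D^2u)]$ on $\mathbb{R}^N$; this follows from cofactor expansion together with the divergence-free property of the rows of the cofactor matrix (as recalled in Appendix A). Integrating twice by parts against $\varphi$ gives
\begin{equation*}
\int_{\mathbb{R}^N}\det(D^2 u)\varphi\,dx \;=\; \tfrac{1}{N}\sum_{k,l=1}^N \int_{\mathbb{R}^N} u\cdot \mathrm{Cof}_{kl}(D^2 u)\cdot \partial_{kl}\varphi\,dx.
\end{equation*}
Now lift to $\mathbb{R}^{N+2}$. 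For any $H\in C_c^2(\mathbb{R}^N\times [0,1)^2)$, the support condition together with the FTC yields $H(x,0,0)=\int_0^1\!\int_0^1 \partial_{x_{N+1}}\partial_{x_{N+2}}H\,dx_{N+1}\,dx_{N+2}$. Applying this with $H(\widetilde x):=U(\widetilde x)\cdot c_{kl}(\widetilde x)\cdot \partial_{kl}\Phi(\widetilde x)$, where $c_{kl}(\widetilde x)$ denotes the $(k,l)$-cofactor of the upper-left $N\times N$ block of $D^2 U(\widetilde x)$, one has $H|_{x_{N+1}=x_{N+2}=0}=u\cdot \mathrm{Cof}_{kl}(D^2u)\cdot\partial_{kl}\varphi$; integrating over $x\in\mathbb{R}^N$ transforms the previous display into
\begin{equation*}
\int_{\mathbb{R}^N}\det(D^2 u)\varphi\,dx \;=\; \tfrac{1}{N}\sum_{k,l=1}^N \int_{\mathbb{R}^{N+2}} \partial_{x_{N+1}}\partial_{x_{N+2}}\bigl[U\cdot c_{kl}\cdot \partial_{kl}\Phi\bigr]\,d\widetilde x.
\end{equation*}

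It remains to reshape the integrand as $\sum_{i,j=1}^{N+2} p_{i,j}(D^2 U)\cdot\partial_{ij}\Phi$. I expand $\partial_{x_{N+1}}\partial_{x_{N+2}}$ by Leibniz across the three factors, producing nine terms; the one with both derivatives landing on $U$ immediately yields $(D^2 U)_{N+1,N+2}\cdot c_{kl}(D^2 U)\cdot \partial_{kl}\Phi$, a degree-$N$ polynomial in entries of $D^2 U$ times $\partial_{kl}\Phi$ already in the target form. The remaining ``bad'' terms carry first-order derivatives of $U$, higher-than-second-order derivatives of $\Phi$, or third-order derivatives of $U$ arising (via the chain rule) from $\partial_{x_{N+1}}c_{kl}$ or $\partial_{x_{N+2}}c_{kl}$. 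For each such term I integrate by parts \emph{only in the horizontal directions $x_1,\ldots,x_N$} -- which generates no boundary contributions because $U$ and $\Phi$ have compact support in $\mathbb{R}^N$ and the vertical directions are never touched -- using the elementary commutation $\partial_{x_{N+1}}\partial_k\partial_l U=\partial_k(\partial_{x_{N+1}}\partial_l U)$ to rewrite third-order derivatives of $U$ as horizontal derivatives of $D^2 U$-entries, and then transferring those horizontal derivatives onto neighboring factors by IBP.

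The main obstacle is verifying that this iterative reshuffling terminates and that the resulting polynomials $p_{i,j}$ actually have degree $N$ in the $(N+2)^2$ variables. For termination I introduce a lexicographic complexity measure on the multiset of derivative orders appearing in each term, e.g.\ $M=\sum_\alpha |k_\alpha-2|$ where $k_\alpha$ ranges over the derivative orders of the factors built from $U$ (with an analogous contribution from the unique $\Phi$-factor), and I choose each IBP to strictly decrease $M$. A key subtlety is that the final $\Phi$-factor is allowed to be $\partial_{ij}\Phi$ for \emph{any} $i,j\in\{1,\ldots,N+2\}$ (including vertical indices), which gives enough flexibility to absorb excess vertical derivatives without needing to IBP in $x_{N+1}$ or $x_{N+2}$. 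Since each IBP conserves the total derivative count on $U$ and on $\Phi$ (at $2N$ and $2$ respectively), the polynomials $p_{i,j}$ obtained upon termination are automatically homogeneous of degree $N$ in the $(N+2)^2$ entries of $D^2 U$; the extension-independence of the final identity is inherited from the extension-independence of every intermediate step (Step 1 is on $\mathbb{R}^N$, Step 2 is the FTC valid for every extension, and Step 3 uses only interior IBPs).
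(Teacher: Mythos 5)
Your big-picture strategy is the right one and shares the main conceptual moves with the paper: use the null-Lagrangian structure of $\det(D^2u)$, lift by a double fundamental-theorem-of-calculus step to $\mathbb{R}^{N+2}$, and then redistribute derivatives by integration by parts. Your opening identity
\begin{equation*}
N\det(D^2u)=\sum_{k,l=1}^N\partial_k\partial_l\bigl[u\cdot \mathrm{Cof}_{kl}(D^2u)\bigr]
\end{equation*}
is correct (it follows from cofactor expansion together with both the row- and column-wise divergence identities for the cofactor matrix, which agree because $\mathrm{Cof}(D^2u)$ is symmetric), and the FTC lift is fine. However, the last step --- the ``iterative reshuffling'' --- has a genuine gap, and as written the argument does not close.

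First, the termination argument. You write ``I choose each IBP to strictly decrease $M$,'' but integration by parts does not permit such a choice: each IBP produces, via the Leibniz rule, a \emph{sum} of terms, and some of those terms do not decrease $M$. Concretely, starting from the Leibniz term $U\cdot c_{kl}\cdot\partial_{kl,N+1,N+2}\Phi$ (orders $0,\,2,\dots,2,\,4$, so $M=4$), integrating by parts in $x_k$ produces a piece
\begin{equation*}
-\int U\cdot\partial_k c_{kl}\cdot \partial_{l,N+1,N+2}\Phi\,d\widetilde x
\end{equation*}
with orders $0,\,3,\,2,\dots,2,\,3$, i.e.\ $M=4$ again; repeating in $x_l$ one of the resulting sub-pieces is $U\cdot\partial_{kl}c_{kl}\cdot\partial_{N+1,N+2}\Phi$, still with $M=4$, and other branches shift the order-$3$ derivative back and forth between $U$-factors and $\Phi$ without any visible monotone quantity to ensure the recursion halts. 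The paper avoids precisely this problem by a shorter and more structured route: it invokes the Brezis--Nguyen codimension-one identity (\cite[Lemma~3]{BN1}) to land in $\mathbb{R}^{N+1}$, applies a single FTC step in $x_{N+2}$, performs \emph{one} layer of integration by parts, and then observes that the dangerous terms cancel \emph{algebraically} because $\partial_jR_k(i)=\partial_kR_j(i)$ (symmetry of mixed partials) forces $\det(C^*_{j,k}(i))=-\det(C^*_{k,j}(i))$, so the corresponding sum vanishes. An analogous cancellation is available in your setup --- for instance $\sum_k\partial_k c_{kl}=0$ kills the subterm displayed above after summing over $k$ --- but you never invoke any such algebraic identity; without it the termination/no-leftover-junk claim is unsupported.

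Second, the homogeneity claim is not correct as stated. You assert that each IBP ``conserves the total derivative count on $U$ and on $\Phi$ (at $2N$ and $2$ respectively),'' but an integration by parts moves a derivative \emph{from} one factor \emph{to} another, so the separate counts on $U$ and on $\Phi$ change at every step (only the grand total $2N+2$ is conserved). Moreover, among the Leibniz terms produced by the FTC step, some start at total $\Phi$-order $4$ and total $U$-order $2N-2$ (e.g.\ $U\cdot c_{kl}\cdot\partial_{kl,N+1,N+2}\Phi$), so the quantities you say are conserved are not even equal to their advertised values at the start. The homogeneity of the eventual $p_{i,j}$ does follow, but the correct reason is that the number of $U$-factors ($N$) is fixed throughout and that \emph{if} the process terminates with every factor at exactly order $2$, then automatically one gets $N$ entries of $D^2U$ and one $\partial_{ij}\Phi$; this argument is only as good as the unestablished termination claim.

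In short: the null-Lagrangian plus codimension-two lift is the right idea, and the setup is sound up to the Leibniz expansion, but you must replace the combinatorial ``complexity measure'' termination argument with explicit algebraic cancellations (cofactor divergence-free identities or equality of mixed partials, as in the paper's $C^*_{j,k}$ cancellation), after which only finitely many integrations by parts are needed.
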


\begin{proof} Denote $\partial_i = (\partial/\partial x_i)$ and row
vectors of length $N$, 
\begin{equation}\label{def-rmi}
\widehat{\nabla}_iU: = (\partial_1 U, \partial_2U, \dots, \partial_{i-1}U, 
\partial_{i+1} U, \dots, \partial_{N+1}U);
\quad R_m(i) : = \partial_m \widehat{\nabla}_iU
\end{equation}
The result of \cite[Lemma $3$]{BN1}, applied to the vector-valued function 
$\nabla_xu\in C^1(\Omega;\mathbb{R}^N)$ gives the identity
\begin{equation}\label{eq-10}
\int_{\Omega} \det(D^2u)(x)\varphi(x)\,dx
=\sum_{i=1}^{N+1} (-1)^{N-i}\int_{\Omega\times (0,1)} \det(R(i))(\partial_i\Phi)\Big|_{x_{N+2}=0} \,dxdx_{N+1},
\end{equation}
where, for each $\widetilde{x}\in\mathbb{R}^{N+2}$ and $i\in \{1,2,\cdots,N+1\}$,  
and the matrix $R(i)\in \mathbb{R}^{N\times N}$ has rows $R_m(i)$, $m=1,\dots,N$,
defined by \eqref{def-rmi}. (For completeness, \eqref{eq-10} is proved
in Appendix \ref{app-det-id}.)

Applying the fundamental theorem of calculus, we write the right-hand side 
of \eqref{eq-10} as
\begin{align}
\sum_{i=1}^{N+1}(-1)^{N+1-i}A(i),\label{eq-l1}
\end{align}
with
\begin{align*}
A(i)&:=\int_{\Omega\times (0,1)^2} \partial_{N+2}\Big[\det(R(i))(\partial_i\Phi)(\widetilde{x})\Big]d\widetilde{x}.
\end{align*}
Hence, 
\begin{align}
A(i)=\sum_{j=1}^{N} A_{j}(i)+\int_{\Omega\times (0,1)^2} \det(R(i))(\partial_{i,N+2}\Phi)d\widetilde{x}.\label{eq-positive-1}
\end{align}
where, for each $1\leq i\leq N+1$ and $1\leq j\leq N$, we have defined
\begin{align*}
A_{j}(i):=\int_{\Omega\times (0,1)^2} \det(B^*_{j}(i))(\partial_i\Phi)d\widetilde{x},
\end{align*}
with $B^*_j(i)$ as the $N\times N$ matrix having rows given by 
\begin{align*}
(B^*_j(i))_m&=\left\lbrace\begin{array}{cl}\partial_{N+2}R_j(i)&\textrm{if}\,\, m=j,\\
R_m(i)&\textrm{if}\,\, m\neq j,\end{array}\right.\quad \textrm{for}\quad 1\leq m\leq N.
\end{align*}

Fix $i\in \{1,\cdots,N+1\}$ and $j\in \{1,\cdots,N\}$.  Using  the integration by parts formula \eqref{det-parts-1} 
in the expression $A_{j}(i)$ with respect to the variable $x_j$, we therefore obtain
\begin{align*}
A_{j}(i)&=-\bigg(\sum_{\substack{k\in \{1,\cdots,N\}\\k\neq j}} \int_{\Omega\times (0,1)^2} \det(C^*_{j,k}(i))(\partial_i\Phi)d\widetilde{x}\bigg)\\
&\hspace{1.2in}-\int_{\Omega\times (0,1)^2} \det(D^*_j(i))(\partial_{i,j}\Phi)d\widetilde{x},
\end{align*}
where
$C^*_{j,k}(i)$
is the $N\times N$ matrix with rows given by
\begin{align*}
(C^*_{j,k}(i))_m=\left\lbrace\begin{array}{cl}\partial_{N+2}\widehat{\nabla}_iU&\textrm{if}\,\, m=j,\\
\partial_j R_k(i)&\textrm{if}\,\, m=k,\\
R_m(i)&\textrm{if}\,\, m\not\in \{j,k\},
\end{array}\right.\quad \textrm{for}\quad 1\leq m\leq N
\end{align*}
and where $D^*_j(i)$ is the $N\times N$ matrix with rows given by
\begin{align*}
(D^*_j(i))_m=\left\lbrace\begin{array}{cl}\partial_{N+2}\widehat{\nabla}_iU&\textrm{if}\,\, m=j\\
R_m(i)&\textrm{if}\,\, m\neq j
\end{array}\right.\quad \textrm{for}\quad 1\leq m\leq N.
\end{align*}

Taking the sum in $j$, we get
\begin{align}
\nonumber A(i)&=-\bigg(\sum_{\substack{(j,k)\in \{1,\cdots,N\}^2\\j\neq k}} \int_{\Omega\times (0,1)^2} \det(C^*_{j,k}(i))(\partial_i \Phi)d\widetilde{x}\bigg)  
 \\  
 \nonumber 
 &
\hspace{0.4in}
-\bigg(\sum_{j=1}^N \int_{\Omega\times (0,1)^2}\det(D^*_j(i))(\partial_{i,j}\Phi)d\widetilde{x}\bigg)  \\  
&\hspace{0.4in}
+\int_{\Omega\times (0,1)^2} \det(R(i))(\partial_{i,N+2}\Phi)d\widetilde{x}.
\label{eq-sum1}
\end{align}

Now, note that for each $(j,k)\in \{1,\cdots,N\}^2$ with $j\neq k$ the definition of $R_j(i)$ and $R_k(i)$ in \eqref{def-rmi} implies $\partial_jR_k(i)=\partial_kR_j(i)$.  Combining this equality with the alternating property of the determinant, we obtain 
\begin{align*}
\det(C^*_{j,k}(i))=-\det(C^*_{k,j}(i)), \quad j\neq k,
\end{align*}
and the first summation on the right-hand side of \eqref{eq-sum1} is equal to zero.

Taking the sum in $i$ (and recalling \eqref{eq-l1}, we have
\begin{align*}
\eqref{eq-10}&=\int_{\Omega\times (0,1)^2}\bigg[\sum_{i=1}^{N+1} (-1)^{N+1-i}\bigg\{\bigg(-\sum_{j=1}^N \det(D^*_j(i))(\partial_{i,j}\Phi)\bigg)\\
&\hspace{2.4in}+\det(R(i))(\partial_{i,N+2}\Phi)\bigg\}\bigg]d\widetilde{x}.
\end{align*}
The right-hand side has the desired form,
completing the proof of Lemma \ref{lem1}.
\end{proof}

Now we can finish the proof of  Theorem \ref{thm1}.  By a well known
theorem of E. M. Stein,
there is
a bounded linear extension operator
\begin{align*}
E:B^{2-\tfrac{2}{N},N}(\mathbb{R}^N)\rightarrow B^{2,N}(\mathbb{R}^N\times [0,1)^2).
\end{align*}
See Appendix \ref{app-b} for details.

Let $u_1, u_2\in C_c^2(\mathbb{R}^N)$ be given along with $\varphi\in C_c^2(\mathbb{R}^N)$, and set
\begin{align*}
U_i=Eu_i\quad\textrm{for}\quad i=1,2,\quad \textrm{and} \quad \Phi=E\varphi.
\end{align*}
We then have, using Proposition \ref{prop-extension}
in Appendix \ref{app-b},
\begin{align}
\lVert D^2U_i\rVert_{L^N(\mathbb{R}^N\times (0,1)\times (0,1))}&\leq C\lVert u_i\rVert_{2-\frac{2}{N},N},\quad i=1,2,\label{eq-extension-d-1}
\end{align}
and
\begin{align}
\lVert D^2\Phi\rVert_{L^\infty(\mathbb{R}^N\times (0,1)\times (0,1))}&\leq C\lVert D^2\varphi\rVert_{L^{\infty}(\mathbb{R}^N)}.\label{eq-extension-d-3}
\end{align}

Applying Lemma \ref{lem1} and invoking the  bound 
\begin{align*}
|p(A)-p(B)|&\leq C(|A|+|B|)^{N-1}|A-B|,\quad A,B\in\mathbb{R}^{(N+2)\times (N+2)},
\end{align*}
which is valid for any fixed homogeneous polynomial $p$ of degree $N$, we find
that the left-hand side of \eqref{est-1} is bounded by
\begin{align*}
&\frac{1}{2}\sum_{i,j=1}^{N+2}\int_{\mathbb{R}^N\times (0,1)\times (0,1)} \Big|p_{i,j}(D^2U_1)-p_{i,j}(D^2U_2)\Big|\, |\partial_{i,j}\Phi|\, d\widetilde{x}\\
&\hspace{0.2in}\leq \sum_{i,j=1}^{N+2}\int_{\mathbb{R}^N\times (0,1)\times (0,1)} C(|D^2U_1|+|D^2U_2|)^{N-1}|D^2(U_1-U_2)| \, |\partial_{i,j}\Phi|\, d\widetilde{x}.
\end{align*}
Estimating the right-hand side of the expression above using
H\"older's inequality and \eqref{eq-extension-d-1}--\eqref{eq-extension-d-3}, we obtain
\eqref{est-1}.   This completes the proof of Theorem $\ref{thm1}$.

\section{Optimality results I ($p\leq N$): scaling analysis.}
\label{sec-opt1}

In this section we establish the optimality results of Theorem $\ref{thm-opt}$ in the cases $p\leq N$.  By Remark \ref{rmk-inclusion}, the inclusion 
$B(s,p)\subset B_{\loc}(2-\frac{2}{N},N)$ fails whenever $s+\frac{2}{N}<1+\frac{N}{p}$.
Thus we shall see that continuous dependence fails for reasons of homogeneity.  
\begin{proposition}
Fix $N\geq 3$, $1<p\leq N$, and $0<s<2$ satisfying $s+\frac{2}{N}<1+\frac{N}{p}$.  Then there exist  $u_k \in  C_c^\infty(\mathbb{R}^N)$ and a test function $\varphi\in C_c^\infty(\mathbb{R}^N)$ satisfying the conditions \eqref{prop14-1} 
and \eqref{prop14-2} of Theorem \ref{thm-opt}.
\end{proposition}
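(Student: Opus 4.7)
The plan is a simple scaling argument built from a single atom $u\in C_c^\infty(\mathbb{R}^N)$ and a test function $\varphi\in C_c^\infty(\mathbb{R}^N)$ (both to be specified below). For a scale parameter $\lambda > 1$ and a free parameter $\alpha$, set
\begin{align*}
u_\lambda(x)\ :=\ \lambda^{-\alpha}\,u(\lambda x).
\end{align*}
A direct change of variable in the definition \eqref{eq1}, together with the observation that (since $s > 1$) the dominant exponent is the one arising from the Gagliardo seminorm of $Du_\lambda$, yields
\begin{align*}
\|u_\lambda\|_{s,p}\ \leq\ C\,\lambda^{s-\alpha-N/p}\,\|u\|_{s,p},
\end{align*}
so the norm condition \eqref{prop14-1} is satisfied as $\lambda\to\infty$ whenever $\alpha > s-\tfrac{N}{p}$.

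For the Hessian side I would integrate by parts twice, using the divergence representation $\det(D^2u) = \tfrac{1}{N}\partial_j(u_{,i}\,C_{ij}(D^2u))$ (with $C_{ij}$ the $(i,j)$-cofactor of $D^2u$) together with the Piola identity $\sum_j\partial_j C_{ij}(D^2u) = 0$, which by the symmetry of $D^2u$ holds in both indices; these identities are of the same flavour as the ones catalogued in Appendix \ref{app-det-id}. This yields
\begin{align*}
\int \det(D^2u)\,\varphi\,dx\ =\ \tfrac{1}{N}\sum_{i,j}\int u\,C_{ij}(D^2u)\,\partial_{i,j}\varphi\,dx,
\end{align*}
and specializing to $u_\lambda$, using the homogeneity $C_{ij}(D^2u_\lambda)(x) = \lambda^{(N-1)(2-\alpha)}C_{ij}(D^2u)(\lambda x)$ together with the change of variable $y = \lambda x$, gives
\begin{align*}
\int \det(D^2u_\lambda)\,\varphi\,dx\ =\ \tfrac{\lambda^{N(1-\alpha)-2}}{N}\sum_{i,j}\int u(y)\,C_{ij}(D^2u)(y)\,\partial_{i,j}\varphi(y/\lambda)\,dy.
\end{align*}
As $\lambda\to\infty$, $\partial_{i,j}\varphi(y/\lambda)\to\partial_{i,j}\varphi(0)$ uniformly on the compact support of $u$, so the right-hand side behaves as $c\,\lambda^{N(1-\alpha)-2}$, where $c := \tfrac{1}{N}\sum_{i,j}\partial_{i,j}\varphi(0)\,M_{ij}$ and $M_{ij}:=\int u(y)\,C_{ij}(D^2u)(y)\,dy$.

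To complete the construction I would pick $u\in C_c^\infty(\mathbb{R}^N)$ coinciding with $|x|^2/2$ on $B_{1/2}$ and vanishing outside $B_1$; on the core $D^2u = I$, so $C_{ij}(D^2u) = \delta_{ij}$ and the central contribution to $M_{ii}$ is strictly positive, while a sufficiently narrow transition annulus ensures $M\neq 0$. Then take $\varphi(x) := \psi(x)\cdot\tfrac{1}{2}\sum_{k,l}M_{kl}\,x_k x_l$ with $\psi\in C_c^\infty$, $\psi\equiv 1$ near the origin, so that $\partial_{i,j}\varphi(0) = M_{ij}$ and $c = \tfrac{1}{N}|M|^2 > 0$. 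Finally choose $\alpha\in(s-\tfrac{N}{p},\,1-\tfrac{2}{N})$: the lower bound delivers \eqref{prop14-1}, while the upper bound makes $N(1-\alpha)-2 > 0$ and hence yields \eqref{prop14-2} along any sequence $\lambda_k\to\infty$. The interval is nonempty precisely by the hypothesis $s+\tfrac{2}{N}<1+\tfrac{N}{p}$.

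The main obstacle is ensuring that the transition annulus does not cancel the positive central contribution to the tensor $M$; this can be handled by making the transition region narrow, or, more robustly, by a perturbation argument exploiting that the map $u\mapsto M(u)$ is a nontrivial $N$-linear form on $C_c^\infty(\mathbb{R}^N)$ and therefore admits some admissible atom with $M\neq 0$.
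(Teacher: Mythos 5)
Your overall strategy -- a single atom $u$, the scaling $u_\lambda(x) = \lambda^{-\alpha}u(\lambda x)$, a test function quadratic at the origin, and the choice $\alpha\in(s-\tfrac{N}{p},\,1-\tfrac{2}{N})$ -- is exactly the paper's, and the scaling exponents all check out (the cofactor IBP identity you use is a rearrangement of the paper's direct change of variable plus Taylor expansion of $\varphi$, combined with \eqref{eq-app-a-cofactor} in both indices, valid for $D^2 u$ by symmetry). The gap is in the final step: you never actually show $M=(M_{ij})\neq 0$, and both suggested fixes fail. Note first that $M_{ij}=\tfrac{N}{2}\int\det(D^2u)\,x_ix_j\,dx$, so the whole question is whether this moment can be made nonzero. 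Your proposal to make the transition annulus narrow goes the wrong way: if the annulus has width $\delta$, then $|D^2u|\sim\delta^{-2}$ there, the cofactors scale like $\delta^{-2(N-1)}$, and the annulus volume is $\sim\delta$, so the annulus contribution to $M$ is of order $\delta^{3-2N}\to\infty$ as $\delta\to 0$ for $N\ge 2$ -- narrowing the transition makes it dominate, not vanish, and for $N$ odd its sign is not controlled. Your fallback, ``a perturbation argument exploiting that $u\mapsto M(u)$ is a nontrivial $N$-linear form,'' is circular: establishing nontriviality of that form is precisely the same task as exhibiting one $u$ with $M(u)\neq 0$. The paper closes this gap with an explicit radial atom: take $g(x)=\int_0^{|x|}h(s)\,ds$ with $h\in C_c^\infty((0,1))$, $\int_0^1h=0$ (which gives compact support), and $\int_0^1 h(r)^N r\,dr\neq 0$; then the radial Hessian identity $\det(D^2g)=(h(r)r)^{N-1}h'(r)$ yields, after one integration by parts, $\int\det(D^2g)|x|^2\,dx=-\tfrac{2C_N}{N}\int_0^1 h(r)^N r\,dr\neq 0$, i.e. $\operatorname{tr}M\neq 0$. (For $N$ even this is automatic for any nonconstant radial atom, since $(h)^N\ge 0$; the moment condition on $h$ is there precisely to handle $N$ odd.) You should replace the hand-waving in your last paragraph with this or an equivalent explicit computation.
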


\begin{proof}  Let $g\in C_c^\infty(\mathbb{R}^N)$ with $\supp
g\subset \{x\in\mathbb{R}^N:|x|<1\}$, and 
consider
$g_\epsilon:\mathbb{R}^N\rightarrow\mathbb{R}$ defined by
\begin{align*}
g_\epsilon(x)=\epsilon^{\sigma}g\left(\frac{x}{\epsilon}\right),\quad 0<\epsilon<1,
\end{align*}
where $\sigma>0$ is a fixed parameter specified later.
For $0<\epsilon<1$,
\begin{align*}
\lVert g_\epsilon\rVert_{s,p}\leq \lVert g_{\epsilon}\rVert_{L^p}^{1-\frac{s}{2}}\lVert D^2g_\epsilon\rVert_{L^p}^{\frac{s}{2}}=\epsilon^{\sigma+\frac{N}{p}-s}\lVert g\rVert_{L^p}^{1-\frac{s}{2}}\lVert D^2g\rVert_{L^p}^{\frac{s}{2}}
\end{align*}

To establish \eqref{prop14-2}, we start by showing that 
$g$ can be chosen so that, in addition,
\begin{align}
\int_{\mathbb{R}^N} \det(D^2g)|x|^2\,dx\neq 0\label{eq-g-condition}
\end{align}
Indeed, define 
\begin{align*}
g(x)=\int_0^{|x|} h(s)ds\,\,\textrm{for}\,\, x\in\mathbb{R}^N,
\end{align*}
for some $h\in C_c^\infty((0,1))$ satisfying   
\[
\int_0^1 h(r) dr = 0,  \ \quad \int_0^1 h(r)^N r \,dr \neq 0.
\]
The first condition implies that $g$ is compactly  supported in the unit ball.
Furthermore, $\det(D^2g) = (h(r)r)^{N-1} h'(r)$.  (See 
\cite{DM}, p. 59, where this identity was used for similar 
purposes.) Therefore,
\begin{equation*}
\int_{\mathbb{R}^N} \det(D^2g)|x|^2\, dx =C_N\int_{0}^\infty h(r)^{N-1}h'(r)r^2\, dr\\
=-\frac{2C_N}{N}\int_0^1 h(r)^Nr\,dr \neq0.
\end{equation*}
In other words, the second condition on $h$ implies \eqref{eq-g-condition}.

Let $\varphi\in C_c^\infty(\mathbb{R}^N)$ be such that
$\varphi(x) = |x|^2 + O(|x|^3)$ as $x\to 0$.  Then
\begin{align*}
\int_{\mathbb{R}^N} \det(D^2g_\epsilon)\varphi \,dx&=\epsilon^{(\sigma-2)N}\int_{\mathbb{R}^N} \det((D^2g)(x/\epsilon))\varphi(x)\,dx\\
&
=\epsilon^{(\sigma-1)N}\int_{\mathbb{R}^N} \det(D^2g(x))\varphi(\epsilon x)\,dx \\
& = \epsilon^{(\sigma-1)N + 2}\int_{\mathbb{R}^N} \det(D^2g(x))|x|^2 \,dx + 
O(\epsilon^{(\sigma-1)N + 3}).
\end{align*} 

Whenever $s+\frac{2}{N}<1+\frac{N}{p}$, we may choose $\sigma$ so that
\begin{align*}
s-\frac{N}{p}<\sigma<1-\frac{2}{N}.
\end{align*}
It follows that 
\begin{align*}
\lVert g_{\epsilon}\rVert_{s,p}\leq C\epsilon^{\sigma+\frac{N}{p}-s} \to 0
\quad \mbox{as} \ \epsilon\to 0
\end{align*}
and
\begin{align*}
\bigg|\int_{\mathbb{R}^N} \det(D^2g_\epsilon)\varphi \,dx\bigg|\geq c\epsilon^{(\sigma-1)N+2} \to \infty  \quad \mbox{as} \ \epsilon\to 0.
\end{align*}
\end{proof}

\section{Optimality results II ($p>N$, $s<2-\frac{2}{N}$): construction of atoms.}
\label{sec-prop2}

A fundamental tool in the rest of our analysis is a formula due to 
B.Y. Chen \cite{C}  for the Hessian determinant of functions 
$F:\mathbb{R}^N\rightarrow\mathbb{R}$ given as a tensor product
\begin{align*}
F(x)=\prod_{i=1}^N f_i(x_i),\quad x=(x_1,\cdots,x_N)\in\mathbb{R}^N.
\end{align*}
Writing $F(x)=\exp(\log(f_1)+\cdots +\log(f_N))$, an application of 
\cite[identity ($3.1$) on pg. 31]{C} gives 
\begin{align}
\det(D^2F)&=F(x)^{N-2}\bigg\{\bigg(\prod_{i=1}^{N} g_i(x_i)\bigg)
+\sum_{j=1}^N\bigg(\prod_{i\neq j} g_i(x_i)\bigg)[f'_j(x_j)]^2\bigg\}\label{hess-identity}
\end{align}
with
\begin{align*}
g_i(x)=f_i''(x)f_i(x)-[f_i'(x)]^2,\quad 1\leq i\leq N.
\end{align*}

\begin{proposition}
Fix $N\geq 3$, $N<p<\infty$, and $0<s<2-\frac{2}{N}$.  Let $\Omega\subset \{x\in \mathbb{R}^N:x_N>0\}$ be a nonempty open set, and let $\chi\in C_c^\infty(\mathbb{R}^N)$ be a smooth cutoff function with $\chi=1$ on $\Omega$.  For each $k\geq 1$, define $u_k:\mathbb{R}^N\rightarrow\mathbb{R}$ 
\begin{align*}
u_k=k^{-\alpha}\chi(x)x_N\prod_{i=1}^{N-1}\sin^2(kx_i)
\end{align*}
with $s<\alpha<2-\frac{2}{N}$ and $x=(x_1,\cdots,x_N)$.
Then for any $\Omega'\subset\subset\Omega$ and 
$\varphi\in C_c^\infty(\Omega)$ with $\varphi \ge0$, and $\varphi=1$ on $\Omega'$, 
the functions $u_k$ satisfy the conditions \eqref{prop14-1} and \eqref{prop14-2} of Theorem \ref{thm-opt}.
\end{proposition}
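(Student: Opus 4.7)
The plan is to exploit the tensor-product structure of $u_k$ via Chen's identity \eqref{hess-identity}. Since $\chi \equiv 1$ on $\Omega$ and $\supp \varphi \subset \Omega$, on the support of $\varphi$ the cutoff disappears and $u_k = k^{-\alpha} F$ with $F(x) = x_N \prod_{i=1}^{N-1} \sin^2(k x_i)$. Applying \eqref{hess-identity} with $f_i(t) = \sin^2(kt)$ for $1\leq i \leq N-1$ and $f_N(t) = t$, a one-variable computation gives $g_i(x_i) = -2k^2 \sin^2(k x_i)$ for $1\leq i \leq N-1$ and $g_N = -1$, while $[f_j'(x_j)]^2 = 4k^2 \sin^2(kx_j)\cos^2(kx_j)$ for $1\leq j \leq N-1$ and $[f_N'(x_N)]^2 = 1$. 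The key observation is that the two $x_N$-independent contributions inside the braces of \eqref{hess-identity}---namely $\prod_{i=1}^N g_i$ and $\bigl(\prod_{i=1}^{N-1} g_i\bigr)[f_N']^2$---are equal in magnitude but opposite in sign, and therefore cancel exactly. What survives simplifies to
\[
\det(D^2 F) = (-1)^{N-1}\, 2^N\, k^{2(N-1)}\, x_N^{N-2} \prod_{i=1}^{N-1} \sin^{2(N-1)}(k x_i) \sum_{j=1}^{N-1} \cos^2(k x_j),
\]
which has the uniform sign $(-1)^{N-1}$ on $\Omega \subset \{x_N > 0\}$ when $N \geq 3$.

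To verify \eqref{prop14-2}, I multiply by $k^{-\alpha N}$ and pair against $\varphi$. Expanding the trigonometric product as a finite sum of exponentials $e^{ik\langle \nu,x\rangle}$ with $\nu \in \mathbb{Z}^{N-1}$ and applying Riemann--Lebesgue to the smooth test function $x_N^{N-2}\varphi(x)$ against each nonzero frequency, only the $\nu = 0$ mean value survives in the limit, so
\[
\int_{\mathbb{R}^N} \det(D^2 u_k)\,\varphi\, dx = (-1)^{N-1}\, c_N\, k^{2(N-1)-\alpha N} \int_\Omega x_N^{N-2} \varphi(x)\, dx + o\bigl(k^{2(N-1)-\alpha N}\bigr),
\]
with $c_N > 0$ a product of one-dimensional averages of the trigonometric factors. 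Since $\varphi \geq 0$, $\varphi \equiv 1$ on $\Omega' \subset\subset \Omega$, and $x_N > 0$ on $\Omega$, the leading integral is strictly positive. The hypothesis $\alpha < 2 - \tfrac{2}{N}$ forces $2(N-1) - \alpha N > 0$, and \eqref{prop14-2} follows.

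For \eqref{prop14-1}, direct differentiation, combined with the fact that $\chi$ has fixed compact support, yields
\[
\|u_k\|_{L^p} \leq C k^{-\alpha}, \qquad \|D u_k\|_{L^p} \leq C k^{1-\alpha}, \qquad \|D^2 u_k\|_{L^p} \leq C k^{2-\alpha},
\]
with $C$ independent of $k$. Interpolating the Gagliardo seminorm of $Du_k$ of order $\sigma = s-1 \in (0,1)$ between $\|Du_k\|_{L^p}$ and $\|D^2 u_k\|_{L^p}$ gives $\|u_k\|_{s,p} \leq C k^{s-\alpha} \to 0$, since $s < \alpha$. The main technical obstacle is identifying the exact cancellation in the Chen expansion: it is the special choice $f_N(t) = t$ that makes both $g_N$ and $[f_N']^2$ constant, which is what forces the vanishing of the two $x_N$-independent terms and leaves a sign-definite expression whose scaling in $k$ and whose distributional pairing can be read off directly.
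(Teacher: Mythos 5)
Your proposal is correct and follows essentially the same route as the paper: both use Chen's tensor-product identity \eqref{hess-identity} applied to $f_1=\cdots=f_{N-1}=\sin^2(k\cdot)$, $f_N=\mathrm{id}$, obtain the sign-definite closed form for $\det(D^2 u_k)$ on $\Omega$ with the same $k^{2(N-1)-\alpha N}$ scaling, and deduce \eqref{prop14-1} from the Gagliardo--Nirenberg interpolation $\lVert u_k\rVert_{s,p}\lesssim \lVert u_k\rVert_{L^p}^{1-s/2}\lVert u_k\rVert_{W^{2,p}}^{s/2}\lesssim k^{s-\alpha}$. Your identification of the cancellation between $\prod g_i$ and $(\prod_{i<N}g_i)[f_N']^2$ is exactly the mechanism behind the paper's formula; the only cosmetic difference is that for \eqref{prop14-2} you compute the leading-order average by a Riemann--Lebesgue argument against the exponential expansion, while the paper simply drops to the subset $\Omega'$ where $\varphi\equiv 1$ (using the uniform sign and $\varphi\ge 0$) and bounds the resulting nonnegative integral below by a positive multiple of $k^{2N-2-\alpha N}$; the two arguments are interchangeable.
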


\begin{proof}
We begin by showing \eqref{prop14-1}.  This follows by writing
\begin{align*}
\lVert u_k\rVert_{s,p}&\leq C\lVert u_k\rVert_{L^p}^{1-\frac{s}{2}}\lVert u_k\rVert_{W^{2,p}}^{\frac{s}{2}}
\leq Ck^{s-\alpha},
\end{align*}
where the second inequality follows from the bounds 
\begin{align*}
\lVert u_k\rVert_{L^p}\leq C\lVert u_k\rVert_{L^\infty}\leq Ck^{-\alpha}\quad\textrm{and}\quad
\lVert D^2u_k\rVert_{L^p}\leq C\lVert D^2u_k\rVert_{L^\infty}\leq Ck^{2-\alpha}
\end{align*}
with constants depending on the measure of $\supp\chi$.  The desired convergence \eqref{prop14-1} 
now follows immediately from the condition $s<\alpha$.

On the other hand, by using \eqref{hess-identity} 
we obtain, for $x\in \Omega$,
\[
\det(D^2u_k)(x)
=-(-2)^{N}k^{2(N-1)-N\alpha}x_N^{N-2}\bigg(\prod_{i=1}^{N-1} \sin(kx_i)\bigg)^{2(N-1)}\bigg(\sum_{j=1}^{N-1} \cos^2(kx_j)\bigg).
\]
Thus $\det(D^2u_k)$ does not change sign in $\Omega$, and 
\begin{align}
\nonumber \bigg|\int_{\mathbb{R}^N} & \det(D^2u_k)\varphi \,dx\bigg|
\geq \bigg|\int_{\Omega'} \det(D^2u_k)\,dx\bigg|\\
&\hspace{0.2in}=k^{2N-2-N\alpha}\int_{\Omega'} x_N^{N-2}\bigg(\prod_{i=1}^{N-1}\sin(kx_i)\bigg)^{2(N-1)}\bigg(\sum_{j=1}^{N-1}\cos^2(kx_j)\bigg)\,dx.\label{eq-det-d2u_k}
\end{align}
Finally, the right-hand side of \eqref{eq-det-d2u_k} is bounded from below by a multiple 
of $k^{2N-2-N\alpha}$, and, by hypothesis, $2N-2-N\alpha>0$.  Therefore, we have 
\eqref{prop14-2}, completing the proof of the proposition.
\end{proof}

Note that if $N$ is even, the argument of this section works 
equally well with the simpler choice 
\begin{align*}
u_k=k^{-\alpha}x_N\prod_{i=1}^{N-1}\sin(kx_i),\quad k\geq 1,
\end{align*}
whose Hessian determinant is
\begin{align*}
\det(D^2u_k)=(-1)^{\frac{N}{2}}k^{2(N-1)-N\alpha}x_N^{N-2}\bigg(\prod_{i=1}^{N-1}\sin(kx_i)\bigg)^{N-2}\bigg(\sum_{i=1}^{N-1} \cos^2(kx_i)\bigg).
\end{align*}

\section{Optimality results III ($p>N$, $s=2-\frac{2}{N}$): interactions of atoms.}
\label{section-5}

In this section, we complete the proof of Theorem $\ref{thm-opt}$ by
establishing the result in the remaining case, when $p>N$ and
$s=2-\frac{2}{N}$.  In this case, we will need a highly lacunary sum
of atoms.

For each $k\geq 1$, fix $k\in\mathbb{N}$ with $k\geq 2$ and define 
$n_\ell$ by\footnote{Our methods can be tightened somewhat to yield 
slightly better growth rates, but even with sharper estimates in place 
the ordinary exponential growth of lacunary-type sequences of the 
form $n_{\ell} \sim C^\ell$ does not suffice.}
\begin{align}
n_{\ell}=k^{ (N^{3\ell}) }, \quad \ell = 1,\, 2, \, \dots  \label{eq-n-ell}
\end{align}
Define 
$w_k:\mathbb{R}^N\rightarrow\mathbb{R}$ by
\begin{align*}
w_k(x)=\sum_{\ell=1}^{k} \frac{1}{n_{\ell}^{2-(2/N)}\ell^{1/N}}g_\ell(x),\quad x\in\mathbb{R}^N,
\end{align*}
with $g_\ell:\mathbb{R}^N\rightarrow\mathbb{R}$ given by
\begin{align*}
g_{\ell}(x)=\prod_{i=1}^{N-1} \sin^2(n_{\ell}x_i),\quad x\in\mathbb{R}^N.
\end{align*}
Finally define $u_k:\mathbb{R}^N\rightarrow\mathbb{R}$ by
\begin{align}
u_k(x)=\chi(x)w_k(x)x_N\label{s3-uk}
\end{align}
for $x\in \mathbb{R}^N$, where $\chi\in C_c^\infty(\mathbb{R}^N)$ is a smooth cutoff function satisfying $\chi(x)=1$ for $x\in (0,2\pi)^N$.

\begin{proposition}
\label{prop3}
Fix $N\geq 3$, $N<p<\infty$, and $s=2-\frac{2}{N}$.  Then, there
exists $\varphi\in C_c^\infty(\mathbb{R}^N)$ with 
\begin{align*}
\varphi(x)=\prod_{i=1}^N \varphi_i(x_i),\quad x=(x_1,\cdots,x_N)\in\mathbb{R}^N,
\end{align*}
and $\supp \varphi\subset (0,2\pi)^N$, and there exist $c>0$ and
$K_0$  such that $u_k$ defined by \eqref{s3-uk} satisfies
\begin{align}
\sup_{k\in\mathbb{N}}\,\, \lVert u_k\rVert_{2-\frac{2}{N},p}<\infty\label{eq-prop3-goal1}
\end{align}
and
\begin{align}
\bigg|\int_{\mathbb{R}^{N}} \det(D^2u_k)(x)\varphi(x)\,dx\bigg|
\geq c(\log k )  - K_0,\quad k\geq 1.
\label{eq-prop3-goal2}
\end{align}
\end{proposition}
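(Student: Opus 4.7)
The proof splits into the Besov bound \eqref{eq-prop3-goal1} and the distributional bound \eqref{eq-prop3-goal2}, with the latter requiring essentially all the work. For \eqref{eq-prop3-goal1} I would apply the Littlewood--Paley characterization of $B(s,p)=B^{p,p}_s$. The atom $h_\ell := c_\ell \chi(x) x_N g_\ell(x)$, with $c_\ell = 1/(n_\ell^{2-2/N}\ell^{1/N})$, has Fourier transform concentrated at $|\xi|\sim n_\ell$, since $g_\ell = 2^{-(N-1)}\prod_{i<N}(1 - \cos(2 n_\ell x_i))$ is a trigonometric polynomial with modes in $\{0,\pm 2n_\ell\}^{N-1}$ and $\chi(x) x_N$ is a smooth compactly supported multiplier whose Fourier transform decays rapidly. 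The super-lacunarity $n_{\ell+1}/n_\ell = k^{N^{3\ell}(N^3-1)}$ separates these atoms into disjoint dyadic frequency bands (up to negligible tails), giving
\[
\|u_k\|_{s,p}^p \lesssim \sum_{\ell=1}^k n_\ell^{sp} \|h_\ell\|_{L^p}^p \lesssim \sum_{\ell=1}^k n_\ell^{sp} c_\ell^p = \sum_{\ell=1}^k \ell^{-p/N},
\]
bounded uniformly in $k$ since $p>N$.

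For \eqref{eq-prop3-goal2}, on $\supp\varphi\subset(0,2\pi)^N$ the cutoff satisfies $\chi\equiv 1$, so $u_k(x)=x_N w_k(x')$ with $x' = (x_1,\ldots,x_{N-1})$ and $\partial_{NN}u_k=0$. Each row of $D^2 u_k$ is linear in $w_k = \sum_\ell c_\ell g_\ell$, and multilinearity of the determinant in rows gives
\[
\det(D^2 u_k)(x) = \sum_{\vec\lambda \in \{1,\ldots,k\}^N} \bigg(\prod_{r=1}^N c_{\lambda(r)}\bigg) D(\vec\lambda;x),
\]
where $D(\vec\lambda;x)$ is the determinant of the matrix whose $r$-th row is the $r$-th row of $D^2(x_N g_{\lambda(r)})$. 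I split into the diagonal $\vec\lambda=(\ell,\ldots,\ell)$ and the off-diagonal remainder. The diagonal is computed via Chen's identity \eqref{hess-identity} with $f_N(x_N)=c_\ell x_N$ and $f_j(x_j)=\sin^2(n_\ell x_j)$ for $j<N$, exactly as in Section~\ref{sec-prop2}; the closed-form integrand averages (via Riemann--Lebesgue applied to $\varphi$) to a main term $c_0/\ell$ in the integral, plus an error of order $n_\ell^{-1}\ell^{-1}$. Summing over $\ell\leq k$ produces the $c\log k - K_0$ lower bound, provided the off-diagonal contribution is uniformly bounded.

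The off-diagonal bound is the crux of the argument. Fourier-expanding each $g_{\lambda(r)}$ writes $D(\vec\lambda;x)$ as a finite linear combination of exponentials $P(x_N)e^{i\xi'\cdot x'}$ with $\xi' = 2\sum_r n_{\lambda(r)}\vec\eta^{(r)}$ and $\vec\eta^{(r)}\in\{-1,0,1\}^{N-1}$. A key structural observation is that the $r$-th row of $D^2(x_N g_{\lambda(r)})$ vanishes identically when $\vec\eta^{(r)}=0$ (since $D^2(x_N\cdot\mathrm{const})=0$), so only $\vec\eta$'s with every $\eta^{(r)}\neq 0$ contribute. For non-constant $\vec\lambda$, let $\ell^\sharp$ be the largest level for which the group-sum $\sum_{r:\lambda(r)=\ell^\sharp}\vec\eta^{(r)}$ is nonzero; if such an $\ell^\sharp$ exists, the super-lacunarity $n_\ell=k^{N^{3\ell}}$ forces $|\xi'|\geq n_{\ell^\sharp}/2$, since all contributions from $\ell<\ell^\sharp$ are bounded by $O(n_{\ell^\sharp-1}) = O(n_{\ell^\sharp}^{1/N^3})$. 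Integration by parts against $\varphi=\prod_j\varphi_j$ then yields decay $|\xi'|^{-M}$ for any $M$. As the amplitude is $\lesssim \prod_r c_{\lambda(r)} n_{\lambda(r)}^2 = \prod_r n_{\lambda(r)}^{2/N}\lambda(r)^{-1/N}\leq n_{\ell^\sharp}^2$ and the number of $\vec\lambda$'s with $\max\lambda=\ell^\sharp$ is $O(k^N)$, even $M=3$ gives a total non-resonant bound $\lesssim \sum_{\ell^\sharp\leq k} k^N/n_{\ell^\sharp}\leq 2k^{N-N^3}\to 0$.

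The truly delicate case, and the main obstacle, is that of non-constant \emph{resonant} $\vec\lambda$ --- those for which every level-group-sum vanishes, giving $\xi'=0$ and no oscillatory decay. The constraint $\eta^{(r)}\neq 0$ for every row forces each level-group to have size $\geq 2$, which rules out resonances entirely when $N=3$ (whence the cleaner direct argument in Appendix~C). For $N\geq 4$ I would group the resonant $\vec\lambda$'s by their level-partition of $\{1,\ldots,N\}$ and exhibit internal algebraic cancellation among the surviving $\vec\eta$-assignments in each resonance class, bounding each class by $\prod_\ell \ell^{-(g_\ell-1)/N}$ (with $g_\ell$ the group sizes) times a combinatorial constant, which sums uniformly. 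This resonance analysis is the principal technical obstacle, and is precisely the reason the construction requires super-exponential growth $n_\ell = k^{N^{3\ell}}$ rather than ordinary lacunary growth $n_\ell\sim C^\ell$, as the footnote to \eqref{eq-n-ell} already indicates: the cube in the exponent supplies the slack needed to dominate the combinatorial factor $k^N$ while keeping the residual $\sum_\ell \ell^{-1/N}(\text{small})$ bounded.
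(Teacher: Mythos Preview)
Your overall architecture---Littlewood--Paley for \eqref{eq-prop3-goal1}, multilinearity plus Chen's formula for the diagonal, and Fourier analysis plus integration by parts for the off-diagonal---matches the paper. The Besov bound and the diagonal computation are essentially identical to what the paper does.

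The genuine gap is in your off-diagonal ``non-resonant'' estimate. Your amplitude bound $\prod_r n_{\lambda(r)}^{2/N}\le n_{\ell^\sharp}^2$ is false whenever $\ell^\sharp<\ell_*:=\max_r\lambda(r)$: the product is governed by $n_{\ell_*}$, not by the largest level with nonvanishing group-sum. Concretely, take $N=4$, $\vec\lambda=(2,2,1,1)$ with $\eta^{(1)}+\eta^{(2)}=0$ but $\eta^{(3)}+\eta^{(4)}\neq 0$; then $\ell^\sharp=1$, the amplitude after multiplying by $C_{\boldsymbol\ell}$ is of order $n_2 n_1$, and integration by parts at frequency $n_1$ yields $n_2 n_1/n_1^M$, which is unbounded for every fixed $M$ since $n_2=n_1^{N^3}$. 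The same problem already arises for $N=3$ with $\vec\lambda=(2,2,1)$ and $\eta^{(1)}+\eta^{(2)}=0$, so your remark that $N=3$ is resonance-free does not make the argument go through there either. These ``partially resonant'' modes (top-level group-sum zero, some lower-level group-sum nonzero) are exactly the obstruction, and they cannot be handled by integrating by parts at the lower frequency.

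The paper's remedy is structural rather than mode-by-mode: it applies the Laplace expansion along the set $I=\{r:\lambda(r)=\ell_*\}$ of top-frequency rows, factoring $\det(H_{\boldsymbol\ell})$ into products $\det(H^{(I,J)})\det(H^{(I',J')})$ where the first minor involves only the frequency $n_{\ell_*}$. Two lemmas then do the work: Lemma~\ref{lem-2} shows $\det(H^{(I,J)})=0$ whenever $\#(I\cap J)\le m-2$ (a row-dependence cancellation invisible in a single Fourier mode), and Lemma~\ref{lem-3} shows that for the surviving $J$'s the constant Fourier coefficient of $\det(H^{(I,J)})$ vanishes. Together these guarantee one can always integrate by parts at the \emph{top} frequency $n_{\ell_*}$, paying only $n_{\ell_*-1}^{2N}$ for the derivative landing on $\det(H^{(I',J')})$. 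Your fully-expanded Fourier approach would have to recover the same cancellations by summing over all $\vec\eta$ with a common $\xi'$, which is essentially reproving these lemmas; the sketch in your final paragraph does not supply that mechanism.
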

Note that the case $p>N$, $s=2-\frac{2}{N}$ of Theorem $\ref{thm-opt}$ follows
by setting
\begin{align*}
\widetilde{u}_k:=\frac{u_k}{\sqrt{\log(k)}}
\end{align*}
for any sequence $k\to \infty$. 
Furthermore, in light of Remark \ref{rmk-inclusion}, Proposition \ref{prop3} completes
the proof of the remaining cases of Theorem \ref{thm-opt}.

\begin{proof}[Proof of boundedness \eqref{eq-prop3-goal1} in Proposition \ref{prop3}]
The argument is nearly the same as the argument in \cite{BN1} and does
not require super-exponential lacunarity.  Standard estimates for
products in fractional order spaces show that it suffices to estimate
$\lVert w_k\rVert_{2-\frac{2}{N},p}$ on $\mathbb T^N$.  
Moreover, the Littlewood-Paley
characterization of the Besov space
$B^{2-\frac{2}{N},p}(\mathbb{T}^N)$ (see, e.g. \cite{Tr}) implies
\begin{align}
\lVert w\rVert_{2-\frac{2}{N},p}&\leq C\bigg(\lVert w\rVert_{L^p([0,2\pi]^N)}^p
+\sum_{j=1}^\infty 2^{(2-\frac{2}{N})jp}\lVert P_R(w) \rVert_{L^p([0,2\pi]^N)}^p\bigg)^{1/p}\label{eq-fracbd}
\end{align}
for suitable\footnote{The operators $P_j$ are defined by $$P_j(\sum
  \alpha_\ell e^{i\ell\cdot x})=\sum_{2^j\leq |\ell|<2^{j+1}}
  \bigg(\rho\Big(\frac{|\ell|}{2^{j+1}}\Big)-\rho\Big(\frac{|\ell|}{2^j}\Big)\bigg)\alpha_\ell
  e^{i\ell\cdot x}$$ for $j\geq 1$, where $\rho\in
  C_c^\infty(\mathbb{R})$ is a suitably chosen bump function.}
operators $P_j:L^p\rightarrow L^p$ such that there exists $C>0$ with
\begin{align*}
\lVert P_jf\rVert_{L^p}\leq C\lVert f\rVert_{L^p},\quad j\geq 1
\end{align*}
and
\begin{align*}
\supp(\widehat{P_jf})\subset \{n\in\mathbb{Z}^N:2^{j-1}<|n|<2^{j+2}\}, \quad 
j = 1,\, 2, \, \dots \, .
\end{align*}

We then have
\begin{align}
\lVert P_j(w_k)\rVert_{L^p}&\leq \sum_{\ell=1}^{k} \frac{1}{(n_\ell)^{2-(2/N)}\ell^{1/N}}\lVert P_j(g_{\ell})\rVert_{L^p}\label{eq-pruk}
\end{align}

To bound $P_jg_{\ell}$, write $g_{\ell}$ in exponential form,
\begin{align}
g_{\ell}(x)=\sum_{\varepsilon\in \{-1,0,1\}^{N-1}} a_{\varepsilon}e^{2n_{\ell}i\varepsilon\cdot \widehat{x}}\label{eq-sum-gell}
\end{align}
for $\widehat{x}=(x_1,x_2,\cdots,x_{N-1})\in\mathbb{R}^{N-1}$ and $x=(\widehat{x},x_N)\in\mathbb{R}^N$, and with $|a_\varepsilon|\leq 1$ for each $\varepsilon$.  Define
\begin{align*}
S(j,\ell)=\{\varepsilon\in \{-1,0,1\}^{N-1}:2^{j-1}\leq 2n_{\ell}|\varepsilon|< 2^{j+2}\},
\end{align*}
and 
\begin{align*}
\tilde\chi(j,\ell)= \begin{cases}
1 & \ \mbox{if} \ S(j,\ell) \neq \emptyset \\
0 & \ \mbox{if} \ S(j,\ell) = \emptyset 
\end{cases}
\end{align*}
Noting that there are at most $3^{N-1}$ terms in the summation in 
\eqref{eq-sum-gell},
\begin{equation} \label{eq-prf}
\lVert P_j(g_{\ell})\rVert_{L^p(\mathbb{T}^N)}\leq  C_N \tilde \chi(j,\ell)
\end{equation}

Next, observe that if $S(j,\ell) \neq \emptyset$, then
\begin{align}
\frac{2^{j-2}}{\sqrt{N-1}}\leq n_{\ell}<2^{j+1}.\label{eq-nlbound}
\end{align}
The lacunary property of the sequence $n_\ell$ implies that 
for each fixed $j$, $\tilde \chi(j,\ell) \neq0$ for at most one value of $\ell$.
Indeed, if $S(j,\ell_1) \neq \emptyset$ and $\ell_2 > \ell_1$, then
\[
\frac{2^{j-2}}{\sqrt{N}} < n_{\ell_1} 
\implies
n_{\ell_2} = k^{(N^{3\ell_2} - N^{3\ell_1})} n_{\ell_1} \ge 
2^{(N^6-N^3)} n_{\ell_1} \ge 8\sqrt{N}\,  n_{\ell_1} > 2^{j+1},
\]
which implies that $S(j,\ell_2) = \emptyset$.   Thus,  applying \eqref{eq-pruk},
\eqref{eq-prf}, 
and the fact that the sum has a single term at the  value of $\ell$ for which
$n_\ell$ is comparable to $2^j$, we have 
\begin{equation} \label{eq-singleterm}
\| P_j w_k\|_{L^p}^p
\le \left( \sum_{\ell = 1}^{k} \frac{C_N\tilde \chi(j,\ell)}{n_\ell^{(2-2/N)} \ell^{1/N} }\right)^p
\le C \sum_{\ell =1}^{k} 
\frac{\tilde \chi(j,\ell)}{2^{(2-2/N)jp} \ell^{p/N} }.
\end{equation}

Combining \eqref{eq-singleterm} with \eqref{eq-fracbd}, we find
\begin{align*}
\eqref{eq-fracbd}
&\leq C\bigg(\lVert w_k\rVert_{L_x^p}^p+\sum_{\ell=1}^{k}\frac{1}{\ell^{p/N}}\bigg(\sum_{j=1}^\infty \widetilde{\chi}(j,\ell)\bigg)\bigg)^{1/p}.
\end{align*}
The constraint  \eqref{eq-nlbound}  also implies that there are only
finitely many terms depending only on dimension
in the sum $\sum_j \tilde\chi(j,\ell)$ for each fixed $\ell$.  Moreover,
the sum over 
$\ell$ of $\ell^{-p/N}$ is bounded independent of $k$ because $p>N$.   Finally, 
the factor $1/n_\ell^{2-2/N}$ in the series defining $w_k$ shows
that $\|w_k\|_{L^p}$ is unformly bounded in $k$.
In all, \eqref{eq-fracbd} is bounded independent of $k$, and this completes
the proof of \eqref{eq-prop3-goal1}.
\end{proof}

Having established \eqref{eq-prop3-goal1}, it remains to
show \eqref{eq-prop3-goal2},  the distributional blow-up of
$\det(D^2u_k)$.   For notational convenience, we introduce
$f_\ell:\mathbb{R}^N\rightarrow\mathbb{R}$, given by
\begin{align}
f_{\ell}(x)=x_N g_\ell(x) = x_N\prod_{i=1}^{N-1} \sin^2(n_\ell x_i),\label{def-f-ell}
\end{align}
for $\ell\geq 1$.  Fix $k\geq 1$ and note that it follows from our hypotheses on the
cutoff function $\chi$ that 
\begin{align*}
u_k(x)=\sum_{\ell=1}^{k} \frac{1}{n_{\ell}^{2-(2/N)}\ell^{1/N}}f_{\ell}(x), \quad
x\in (0,2\pi)^N.
\end{align*}

Using multilinearity of the determinant we obtain
\begin{align}
\det(D^2u_k)=
\sum_{\boldsymbol \ell} C_{\boldsymbol \ell} \det (H_{\boldsymbol \ell} (x))
\label{eq-multilinear-det}
\end{align}
where the sum is over ${\boldsymbol \ell}=(\ell_1,\ell_2,\cdots,\ell_N)\in \{1,2,\cdots,k\}^N$, and we have set
\begin{align}
C_{{\boldsymbol \ell}}=\prod_{i=1}^N \frac{1}{n_{\ell_i}^{2-(2/N)}(\ell_i)^{1/N}}\label{eq-def-C}
\end{align}
and, for each $x\in\mathbb{R}^N$,  the $N\times N$ matrix $H_{{\boldsymbol \ell}}=H_{{\boldsymbol \ell}}(x)$ is given by
\begin{align}
H_{{\boldsymbol \ell}}=\Big( \partial_{i,j}f_{\ell_i} \Big)_{(i,j)\in \{1,\cdots,N\}^2}.\label{eq-def-D}
\end{align}

When ${\boldsymbol
  \ell}=(\ell,\ell,\cdots,\ell)$, $H_{{\boldsymbol\ell}}$ is the Hessian
  matrix of $f_\ell$.  On the other hand, when the indices
  of $\boldsymbol \ell$ are not equal, the matrix involves different
  functions in different rows.  We separate these two types
  of terms, and write
\begin{align*}
\det(D^2u_k)&=\sum_{\ell=1}^{k} C_{(\ell,\ell,\cdots,\ell)}\det(H_{(\ell,\ell,\cdots,\ell)})(x)\\
&\hspace{0.4in}+\sum_{{\boldsymbol \ell}\in \mathscr{L}} C_{{\boldsymbol\ell}}\det(H_{{\boldsymbol\ell}})(x)
\end{align*}
where $\mathscr{L}$ denotes the collection of all $N$-tuples ${\boldsymbol \ell}=(\ell_1,\cdots,\ell_N)\in \{1,2,\cdots,k\}^N$ such that there exist $i$, $j \in \{1,\cdots,N\}$ with $\ell_i\neq \ell_j$.  This gives
\begin{align}
\bigg|\int \det(D^2u_k)\varphi(x)\,dx\bigg|&\geq (I)-(II)\label{eq-det-phi-expn}
\end{align}
with
\begin{align}
(I)&:=\bigg|\sum_{\ell=1}^{k} C_{(\ell,\ell,\cdots,\ell)}\int \det(H_{(\ell,\ell,\cdots,\ell)})(x)\varphi(x)\,dx\bigg|\label{eq-def-I}
\end{align}
and
\begin{align}
(II)&:=\bigg|\sum_{{\boldsymbol\ell}\in\mathscr{L}} C_{{\boldsymbol\ell}}\int \det(H_{{\boldsymbol\ell}})(x)\varphi(x)\,dx\bigg|.\label{eq-def-II}
\end{align}

The term $(I)$ is the main term contributing to blowup on the right-hand side of \eqref{eq-det-phi-expn}, while $(II)$ will be interpreted as an error term.  Indeed, arguing as in Section $\ref{sec-prop2}$ above, the quantity $(I)$ is equal to
\begin{align*}
&\bigg|\sum_{\ell=1}^{k} \frac{1}{n_{\ell}^{2(N-1)}\ell} \int\det(D^2f_{\ell})\varphi(x)\,dx\bigg|\\
&\hspace{0.2in}=2^N\sum_{\ell=1}^{k} \frac{1}{\ell} \int x_N^{N-2}\bigg(\prod_{i=1}^{N-1} \sin(n_{\ell}x_i)\bigg)^{2(N-1)}\bigg(\sum_{j=1}^{N-1} \cos^2(n_{\ell}x_j)\bigg)\varphi(x)\,dx 
\end{align*}
and we therefore obtain
\begin{align}
(I)&\geq c\log(k)\label{eq-estimate-I}
\end{align}
for some $c>0$.

To estimate $(II)$, we will prove the following proposition.
\begin{proposition}
\label{prop-Hbound-3} Let $N\ge 3$.  Let the sequences $(C_{{\boldsymbol\ell}})$ and $(H_{{\boldsymbol\ell}})$ be as in \eqref{eq-def-C}--\eqref{eq-def-D}.  
Then there exists a dimensional constant $C>0$ such that 
for all ${\boldsymbol\ell}\in\mathscr{L}$ and $\varphi\in C_c^\infty((0,2\pi)^N)$
we have
\begin{align}
C_{{\boldsymbol\ell}}\bigg|\int_{\mathbb R^N} \det(H_{{\boldsymbol \ell}})(x)\varphi(x)\,dx\bigg|
\leq \frac{C\lVert \varphi\rVert_{C^{2}}}{k^{2N}}. \label{eq-Nd-goal}
\end{align}
\end{proposition}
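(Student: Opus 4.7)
The plan is to decompose $\det(H_{\boldsymbol\ell})$ as a finite sum of Fourier atoms in $\widehat x = (x_1,\dots,x_{N-1})$ and bound each after integration by parts against $\varphi=\prod_j\varphi_j(x_j)$, exploiting the super-lacunary growth \eqref{eq-n-ell} of $(n_\ell)$. The argument splits into two cases depending on whether the net Fourier frequency of the atom is nonzero.

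First I would expand $\det(H_{\boldsymbol\ell}) = \sum_{\pi,\,\pi(N)\neq N} \sgn(\pi)\prod_i \partial_i\partial_{\pi(i)}f_{\ell_i}$ via Leibniz, using $\partial_N^2 f_{\ell_N}=0$ to exclude $\pi(N)=N$. With $\sin^2\theta = (1-\cos 2\theta)/2$, each entry $\partial_i\partial_j f_{\ell_i}$ is a finite sum of atoms $c\,x_N^a\exp(i\vec\eta\cdot\widehat x)$ with $a\in\{0,1\}$, $\vec\eta\in\{-2n_{\ell_i},0,2n_{\ell_i}\}^{N-1}$. Multilinearity gives
\begin{equation*}
\det(H_{\boldsymbol\ell}) = \sum_\xi c_\xi(x_N)\, e^{i\xi\cdot\widehat x}, \qquad \xi=\sum_{i=1}^N 2n_{\ell_i}\vec\varepsilon^{(i)},\quad \vec\varepsilon^{(i)}\in\{-1,0,1\}^{N-1}.
\end{equation*}
Setting $j_*=\pi^{-1}(N)$, the entries at positions $(j_*,N)$ and $(N,\pi(N))$ have magnitude $O(n_{\ell_{j_*}})$ and $O(n_{\ell_N})$ respectively, and all remaining entries have magnitude $O(x_N n_{\ell_i}^2)$, so
\begin{equation*}
|c_\xi|\le C_N\, x_N^{N-2}\, n_{\ell_N}\, n_{\ell_{j_*}}\prod_{i\notin\{N,j_*\}} n_{\ell_i}^2.
\end{equation*}

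For an atom with $\xi\neq 0$, group indices by $J_\ell=\{i:\ell_i=\ell\}$ so that $\xi=\sum_\ell 2n_\ell\sum_{i\in J_\ell}\vec\varepsilon^{(i)}$. Let $\ell^{\mathrm{eff}}$ be the largest $\ell$ for which $\sum_{i\in J_\ell}\vec\varepsilon^{(i)}\neq 0$; the super-lacunary estimate $n_{\ell+1}/n_\ell\ge k^{N^{3\ell}(N^3-1)}$ forces $|\xi_{j^\dagger}|\ge c_N n_{\ell^{\mathrm{eff}}}$ for some $j^\dagger\in\{1,\dots,N-1\}$. Two integrations by parts in $x_{j^\dagger}$ yield a gain $|\xi_{j^\dagger}|^{-2}\lVert\varphi\rVert_{C^2}$. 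With $\ell^*=\max_i\ell_i$ and $\ell^{**}$ the second largest, the hypothesis $\boldsymbol\ell\in\mathscr L$ gives $|J_{\ell^*}|\le N-1$, so $\prod_i n_{\ell_i}^{2/N}\le n_{\ell^*}^{2(N-1)/N}\, n_{\ell^{**}}^{2/N}$, and combined with $n_{\ell^*}\ge n_{\ell^{**}}^{N^3}$ one obtains
\begin{equation*}
C_{\boldsymbol\ell}\cdot\frac{|c_\xi|}{|\xi_{j^\dagger}|^2}\le C_N\lVert\varphi\rVert_{C^2}\cdot\frac{\prod_i n_{\ell_i}^{2/N}}{n_{\ell^{\mathrm{eff}}}^2\prod_i\ell_i^{1/N}},
\end{equation*}
which is much smaller than $k^{-2N}$ for $N\ge 3$.

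The main obstacle is the case $\xi=0$, which can arise for $\boldsymbol\ell\in\mathscr L$ only when every distinct $\ell$-value in $\boldsymbol\ell$ appears at least twice (in particular requiring $N\ge 4$). To handle it, I would use the block Laplace expansion
\begin{equation*}
\det(H_{\boldsymbol\ell}) = \sum_{\{C_\ell\}}\sgn(\{C_\ell\})\prod_\ell \det(\partial_i\partial_c f_\ell)_{i\in J_\ell,\, c\in C_\ell},
\end{equation*}
summed over partitions of $\{1,\dots,N\}$ into blocks of sizes $|J_\ell|$. Each sub-block is a generalized Jacobian of $(\partial_c f_\ell)_{c\in C_\ell}$ in the variables $(x_i)_{i\in J_\ell}$, and satisfies the divergence identity $\det(\partial_i\partial_c f_\ell)_{i,c}=\sum_{c\in C_\ell}\partial_c(\cdot)$ adapted from \cite[Lemma~3]{BN1} (compare Lemma \ref{lem1}). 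Integrating by parts in $\varphi$ transfers a derivative onto $\varphi$ in each block; together with the super-lacunary decoupling of Fourier modes between distinct $\ell$-blocks, this makes the zero-frequency coefficient $c_0$ factor into a product of Jacobian-means, each of which vanishes. Combining this with the $\xi\neq 0$ analysis above yields the uniform bound \eqref{eq-Nd-goal}.
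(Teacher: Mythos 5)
Your expansion into Fourier atoms indexed by $\xi=\sum_{i}2n_{\ell_i}\vec\varepsilon^{(i)}$ is natural, but the central estimate in the $\xi\neq 0$ case has a gap.  You bound the term by $n_{\ell^{\mathrm{eff}}}^{-2}\prod_i n_{\ell_i}^{2/N}$ after two integrations by parts in $x_{j^\dagger}$, and your closing inequalities implicitly treat $\ell^{\mathrm{eff}}$ as if it were $\ell^*=\max_i\ell_i$.  But $\ell^{\mathrm{eff}}$, which you define as the largest $\ell$ with $\sum_{i\in J_\ell}\vec\varepsilon^{(i)}\neq 0$, can be strictly smaller than $\ell^*$ whenever $|J_{\ell^*}|\geq 2$ and the top-block selectors cancel, i.e. $\sum_{i\in J_{\ell^*}}\vec\varepsilon^{(i)}=0$ with each $\vec\varepsilon^{(i)}\neq 0$.  (Since no entry $\partial_i\partial_j f_\ell$ has a constant Fourier mode, each $\vec\varepsilon^{(i)}$ is individually nonzero, but their sum over $J_{\ell^*}$ can still be zero.)  For example, for $N=4$ with $\ell_1=\ell_2=L\gg \ell_3=\ell_4=1$ and $\vec\varepsilon^{(1)}=-\vec\varepsilon^{(2)}$, you get $\ell^{\mathrm{eff}}=1$ and your bound is $\approx n_L/n_{1}$, which blows up as $L\to\infty$ rather than decaying.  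These atoms do in fact cancel, but the cancellation happens across permutations $\pi$, so it is invisible to an estimate of $|c_\xi|$ by a worst-case product bound.  You would need a structural vanishing lemma of the type the paper proves as Lemma~\ref{lem-2} (two rows of the top-frequency minor are proportional) to dispose of them.

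By contrast, the paper's Laplace expansion with respect to the row set $I=J_{\ell^*}$ factors $\det(H_{\boldsymbol\ell})=\pm\sum_J \pm\det(H^{(I,J)})\det(H^{(I',J')})$, where $\det(H^{(I,J)})$ carries \emph{only} frequencies $2n_{\ell^*}z$ and $\det(H^{(I',J')})$ only lower frequencies; the problematic low-$\ell^{\mathrm{eff}}$ atoms live inside the constant-in-$z$ part of $\det(H^{(I,J)})$, and Lemmas~\ref{lem-2} and~\ref{lem-3} show precisely that these constant modes vanish (identically when $\#(I\cap J)\le m-2$, and on integration when $\#(I\cap J)\in\{m-1,m\}$), so the integration-by-parts gain is always at full strength $n_{\ell^*}^{-2}$.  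Your $\xi=0$ discussion is only a sketch: the block-Laplace idea and the claim that each block's constant mode vanishes are plausible outlines, but a proof would need precisely the analogues of Lemma~\ref{lem-3}, and in any event the $\xi=0$ case is not the only place where a cancellation argument is required.  I would reorganize the proof around the paper's Laplace decomposition (which automatically enforces $\ell^{\mathrm{eff}}=\ell^*$ on the surviving modes), or alternatively add a lemma showing that $c_\xi=0$ whenever $\sum_{i\in J_{\ell^*}}\vec\varepsilon^{(i)}=0$.
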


To complete the proof of Proposition \ref{prop3}, note
that Proposition \ref{prop-Hbound-3}
implies
\begin{align*}
\nonumber
(II) \leq \sum_{{\boldsymbol\ell}\in\mathscr{L}} 
C_{{\boldsymbol\ell}}\bigg|\int \det(H_{{\boldsymbol\ell}})(x)\varphi(x)\,dx\bigg|
\nonumber 
\leq \sum_{{\boldsymbol\ell}\in \mathscr{L}} \frac{C \lVert \varphi
\rVert_{C^{2}}}{k^{2N}}
\leq C\lVert \varphi\rVert_{C^2}
\end{align*}
since $\mathscr{L}$ has fewer than $k^N$ elements. 
The desired conclusion \eqref{eq-prop3-goal2} now follows immediately
from \eqref{eq-det-phi-expn}.

All that remains is to prove Proposition \ref{prop-Hbound-3}.  The main tool we use 
is the Laplace expansion for the determinant.   To formulate it, we introduce some 
notations.

Denote by $C(m)$ the set of subsets of $ \{1,2,3,\dots, N\}$ with $m$ elements.
For an $N\times N$ matrix $H$ and $I$, $J\in C(m)$, 
\[
H^{(I,J)}
\]
denotes the $m\times m$ matrix whose rows are indexed by $I$ and
whose columns are indexed by $J$.  We also write $\# I$ for the
number of elements $m$ of $I$ and 
\[
\sigma(I) := \left(\sum_{i\in I} i\right) - \frac{m(m+1)}{2}
\]

\begin{lemma}[Laplace expansion]
\label{lem-laplace}
Let $H\in\mathbb{R}^{N\times N}$ be a square matrix.  Then for every $m\in\mathbb{N}$ and $I\in C(m)$, we have
\begin{align*}
\det(H)=(-1)^{\sigma(I)}\sum_{J\in C(m)} (-1)^{\sigma(J)}\det(H^{(I,J)})\det(H^{(I',J')}),
\end{align*}
where $I'=\{1,2,\cdots,N\}\setminus I$ and $J'=\{1,2,\cdots,N\}\setminus J$.
\end{lemma}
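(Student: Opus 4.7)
The approach is to derive the formula directly from the Leibniz expansion $\det(H) = \sum_{\pi \in S_N} \sgn(\pi) \prod_{i=1}^{N} H_{i,\pi(i)}$ by grouping permutations according to the image of $I$. For each $\pi \in S_N$, set $J := \pi(I) \in C(m)$; then automatically $\pi(I') = J'$. Partitioning the Leibniz sum by this image, it suffices to show that for each fixed $J \in C(m)$ the contribution from all $\pi$ with $\pi(I) = J$ equals $(-1)^{\sigma(I)+\sigma(J)}\det(H^{(I,J)})\det(H^{(I',J')})$; summation in $J$ (and factoring out the $J$-independent sign $(-1)^{\sigma(I)}$) then produces the claimed identity.

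\textbf{Decomposition.} Write $I = \{i_1 < \cdots < i_m\}$, $J = \{j_1 < \cdots < j_m\}$, $I' = \{i_1' < \cdots < i_{N-m}'\}$, $J' = \{j_1' < \cdots < j_{N-m}'\}$. For fixed $J$, each admissible $\pi$ is uniquely specified by a pair of bijections $\alpha : I \to J$ and $\beta : I' \to J'$, which correspond to permutations $\sigma_1 \in S_m$ and $\sigma_2 \in S_{N-m}$ via $\alpha(i_k) = j_{\sigma_1(k)}$ and $\beta(i_\ell') = j_{\sigma_2(\ell)}'$. The Leibniz product $\prod_{i=1}^N H_{i,\pi(i)}$ then splits as $\prod_{k=1}^m H_{i_k, j_{\sigma_1(k)}} \cdot \prod_{\ell=1}^{N-m} H_{i_\ell', j_{\sigma_2(\ell)}'}$, and summing over $\sigma_1$ and $\sigma_2$ independently (each weighted by its own sign) recovers $\det(H^{(I,J)}) \det(H^{(I',J')})$ up to an overall sign factor that does not depend on $\sigma_1$ or $\sigma_2$.

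\textbf{Main obstacle: the sign.} The only nontrivial step is to establish $\sgn(\pi) = (-1)^{\sigma(I)+\sigma(J)} \sgn(\sigma_1) \sgn(\sigma_2)$. I would verify this by factoring $\pi = \rho_J \circ (\sigma_1 \oplus \sigma_2) \circ \rho_I^{-1}$, where $\rho_I$ is the shuffle sending $(1,2,\ldots,N) \mapsto (i_1,\ldots,i_m,i_1',\ldots,i_{N-m}')$ and $\rho_J$ is defined analogously on the codomain side. Since $\sgn(\sigma_1 \oplus \sigma_2) = \sgn(\sigma_1)\sgn(\sigma_2)$, it remains to compute $\sgn(\rho_I)$ and $\sgn(\rho_J)$ by counting inversions. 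The two blocks $I$ and $I'$ in $\rho_I$ are listed in increasing order, so contribute no internal inversions; the cross-inversions coming from the $k$-th entry $i_k$ of the first block are exactly those $i_\ell' \in I'$ with $i_\ell' < i_k$, of which there are $i_k - k$ (among the $i_k - 1$ values below $i_k$, precisely $k-1$ lie in $I$). Summing gives $\sum_{k=1}^m (i_k - k) = \sigma(I)$, so $\sgn(\rho_I) = (-1)^{\sigma(I)}$, and the identical argument yields $\sgn(\rho_J) = (-1)^{\sigma(J)}$. Substituting into the factorization completes the proof.
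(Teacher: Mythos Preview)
Your argument is correct: partitioning the Leibniz sum according to $J=\pi(I)$, factoring each $\pi$ through the shuffle permutations $\rho_I$ and $\rho_J$, and counting cross-inversions to obtain $\sgn(\rho_I)=(-1)^{\sigma(I)}$ is exactly the standard derivation of the Laplace expansion along a block of rows. The inversion count $\sum_{k=1}^m(i_k-k)=\sigma(I)$ is right, and the factorization $\pi=\rho_J\circ(\sigma_1\oplus\sigma_2)\circ\rho_I^{-1}$ is the clean way to track the sign.

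As for comparison with the paper: the paper does not prove this lemma at all. It states the identity and refers to textbook treatments (Eves, \S3.7; Marcus--Minc, \S I.2). Your write-up is precisely the kind of argument those references contain, so you have simply supplied the details that the authors chose to outsource.
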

For a textbook treatment of Lemma $\ref{lem-laplace}$, see Section $3.7$ in \cite{E} (see also Section $I.2$ of \cite{MM}).

We isolate the rows corresponding to the largest frequency, as
follows.  For
a fixed $N$-tuple $\boldsymbol \ell = (\ell_1,\cdots,\ell_N)\in\mathscr{L}$, denote
\begin{align}
\ell_*=\max\{\ell_i:i=1,\cdots,N\}.\label{plus1}
\end{align}
Define
\begin{align}
I_{\boldsymbol \ell} :=\{i:\ell_i=\ell_*\}\subset \{1,2,\cdots,N\},\label{plus2}
\end{align}
We will abbreviate by $I=I_{\boldsymbol \ell}$ and $H=H_{\boldsymbol \ell}
$ when there is no potential for
confusion.   Lemma \ref{lem-laplace} applies to this matrix $H$ and index
set $I$ with $\#I = m$, giving
\begin{align}
\nonumber &\bigg|C_{\boldsymbol \ell}\int \det(H_{\boldsymbol\ell}(x))\varphi(x)\, dx\bigg|\\
&\hspace{0.2in}=\bigg|C_{\boldsymbol\ell}\sum_{J\in C(m)}\int (-1)^{\sigma(J)}\det(H^{(I,J)})\det(H^{(I',J')})\varphi\, dx\bigg|\label{eq-laplace-applied}
\end{align}

Note that since not all $\ell_i$ are equal, $m\le N-1$.  Furthermore, we
have separated the frequencies into two groups:
$\det(H^{(I,J)})$  involves only frequencies of the highest order $n_{\ell_*}$ 
or zero, while $\det(H^{(I',J')})$ involves only frequencies of lower order $n_{\ell_i}$ with $\ell_i\leq \ell_*-1$.  In fact, all of our estimates will be given in terms of $n_{\ell_*}$ and $n_{\ell_*-1}$.

We begin with the observation that for all $J\in C(m)$,
\begin{align*}
|\det(H^{(I,J)})|\leq 
Cn_{\ell_*}^{2m}\quad\textrm{and}\quad |\det(H^{(I',J')})|\leq Cn_{\ell_*-1}^{2(N-m)}.
\end{align*}
These estimates follow from degree considerations: each entry in the
$m\times m$ matrix $H^{(I,J)}$ is controlled by $n_{\ell_*}^2$, while
each entry of $H^{(I',J')}$ (a square matrix of size $N-m$) is
controlled by $n_{\ell_*-1}^2$.

Taking $J=I$, a stronger bound holds for $\det(H^{(I,I)})$ in the case
$N\in I$; in this case, one row and one column of $H^{(I,I)}$ each
consist of entries controlled by $n_{\ell_*}$, while all other entries
are controlled by $n_{\ell_*}^2$.  This gives
\begin{align}
|\det(H^{(I,I)})|\leq Cn_{\ell_*}^{2m-2} \quad \mbox{if} \ N\in I.\label{eq-HII-bd}
\end{align}

Since we have the bound $C_{\boldsymbol \ell}\leq n_{\ell_*}^{-(2-(2/N))m}$, 
\eqref{eq-HII-bd} and $m\le N-1$ lead to
\begin{align}
\nonumber 
\bigg| C_{\boldsymbol\ell}
\int \det(H^{(I,I)})\det(H^{(I',I')})\varphi\, dx \bigg|
&\leq C\lVert \varphi\rVert_{L^\infty}
\frac{(n_{\ell_*-1})^{2(N-m)}}{(n_{\ell_*})^{2-(2m/N)} }
\\
\nonumber 
&\leq C\lVert \varphi\rVert_{L^\infty}
\frac{(n_{\ell_*-1})^{2N}}
{(n_{\ell_*})^{2/N}}
\\
&\leq \frac{C\lVert \varphi\rVert_{L^\infty}}{k^{2N}}\label{eq-group1-bound},
\end{align}
where we have used the definition of $n_\ell$ given 
in \eqref{eq-n-ell} to obtain the last inequality.

This estimates the contribution of the term $J=I$ to \eqref{eq-laplace-applied} 
in the case $N\in I$.  To handle the contribution of the remaining terms, we are 
required to identify additional cancellations in $\det(H^{I,J})$.  Before proceeding, 
we condense notations further.  When $\boldsymbol \ell$ and $\ell_*$ are fixed,  
we will use the abbreviation
\begin{align*}
n=n_{\ell_*}.
\end{align*}
Moreover, with this convention in mind and in view of the particular structure of 
the matrix $H$, it will be useful to introduce the notation
\begin{align}
\widehat{S}_{A}=\prod_{i\in \{1,2,\cdots,N-1\}\setminus A} \sin^2(nx_i)\,\, \textrm{for}\,\, A\subset\{1,2,\cdots,N-1\},\label{def-SA}
\end{align}
and use the abbreviation $\widehat{S}_{a_1,a_2,\cdots,a_k}=\widehat{S}_{\{a_1,a_2,\cdots,a_k\}}$.

\begin{lemma}
\label{lem-2} Fix $\boldsymbol \ell \in \mathscr{L}$ and suppose that $I= I_{\boldsymbol\ell}\in C(m)$ is 
as in \eqref{plus2}.  Then for each $J\in C(m)$ with
\begin{align*}
\#(I\cap J)\leq m-2
\end{align*}
we have
\begin{align}
\det(H^{(I,J)})=0.\label{eq-det-goal}
\end{align}
\end{lemma}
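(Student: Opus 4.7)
The plan is to prove $\det(H^{(I,J)})=0$ by showing that the $m\times m$ matrix $H^{(I,J)}$ has rank at most $m-1$.  The key point is that the tensor product structure of $f_{\ell_*}$ forces the off-diagonal entries of $H^{(I,J)}$ to factor as a product of a function of the row variable $x_i$ and a function of the column variable $x_j$, so the ``factored part'' of $H^{(I,J)}$ has rank at most two; the remaining diagonal correction has rank at most $|I\cap J|$, and the hypothesis $|I\cap J|\le m-2$ closes the gap.

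Setting $n=n_{\ell_*}$ and $s_p=\sin^2(nx_p)$ for $p\in\{1,\ldots,N-1\}$, differentiation of $f_{\ell_*}(x)=x_N\prod_{p=1}^{N-1}s_p$ yields
\begin{align*}
\partial_{i,j}f_{\ell_*}&=x_Nn^2\sin(2nx_i)\sin(2nx_j)\prod_{p\notin\{i,j\}}s_p,\quad i\ne j,\ i,j\in\{1,\ldots,N-1\},\\
\partial_{i,i}f_{\ell_*}&=2x_Nn^2\cos(2nx_i)\prod_{p\ne i}s_p,\quad i\in\{1,\ldots,N-1\},\\
\partial_{N,i}f_{\ell_*}&=n\sin(2nx_i)\prod_{p\ne i}s_p,\quad i\in\{1,\ldots,N-1\},
\end{align*}
with $\partial_{N,N}f_{\ell_*}=0$.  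On the open set where every $s_p>0$, I would introduce $c_p=\sin(2nx_p)/s_p$, $S=\prod_{p=1}^{N-1}s_p$, and $B=x_Nn^2S$, so that $\partial_{i,j}f_{\ell_*}=Bc_ic_j$ for distinct $i,j\in\{1,\ldots,N-1\}$, $\partial_{i,i}f_{\ell_*}=Bc_i^2-2B/s_i$ (using $2\cos(2nx)\sin^2(nx)-\sin^2(2nx)=-2\sin^2(nx)$), and $\partial_{N,i}f_{\ell_*}=nSc_i$ for $i\in\{1,\ldots,N-1\}$.

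I would then split $H^{(I,J)}=M+D$, where $D$ vanishes except at positions $(i,i)$ with $i\in I\cap J$ and $i\ne N$, where it equals $-2B/s_i$.  Thus $\mathrm{rank}(D)\le|I\cap J|$, with the strict improvement $\mathrm{rank}(D)\le|I\cap J|-1$ when $N\in I\cap J$.  If $N\notin I\cap J$, the matrix $M$ is a single outer product $vw^T$ of vectors $v=(v_i)_{i\in I}$ and $w=(w_j)_{j\in J}$ (taking $v_i=Bc_i$, and $v_N=nS$ if $N\in I$; $w_j=c_j$, and $w_N=nS/B$ if $N\in J$), so $\mathrm{rank}(M)\le 1$.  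If $N\in I\cap J$, the rows of $M$ indexed by $I\setminus\{N\}$ are all scalar multiples of the vector $r$ with $r_j=Bc_j$ for $j\ne N$ and $r_N=nS$, while row $N$ is the independent vector with entry $nSc_j$ at $j\ne N$ and $0$ at $j=N$, so $\mathrm{rank}(M)\le 2$.

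Combining, $\mathrm{rank}(H^{(I,J)})\le\mathrm{rank}(M)+\mathrm{rank}(D)\le 1+|I\cap J|\le m-1$ in every case, so $\det(H^{(I,J)})=0$ on the open set where the $c_p$ are defined.  Since $\det(H^{(I,J)})$ is a smooth function of $x$, it vanishes identically.  The main obstacle is a careful bookkeeping of the four sub-cases depending on whether $N\in I$ and $N\in J$; once the tensor product structure of $f_{\ell_*}$ is used to factor the off-diagonal entries, the rank bound reduces to elementary linear algebra.
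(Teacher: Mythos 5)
Your argument is correct but takes a genuinely different route from the paper. The paper's proof singles out two rows of $H^{(I,J)}$ corresponding to distinct indices $i_1,i_2\in I\setminus J$ (which exist because $\#(I\cap J)\le m-2$ forces $\#(I\setminus J)\ge 2$) and exhibits an explicit, pointwise linear dependence between them, split into three cases according to whether $N$ lies in $\{i_1,i_2\}$, in $J$, or in neither. Your approach instead gives a global rank bound: after writing $\partial_{i,j}f_{\ell_*}=Bc_ic_j$ (off-diagonal), $\partial_{i,i}f_{\ell_*}=Bc_i^2-2B/s_i$, $\partial_{N,j}f_{\ell_*}=nSc_j$ on the dense open set where all $\sin(n_{\ell_*}x_p)\ne 0$, you split $H^{(I,J)}=M+D$ with $D$ a diagonal correction supported on $|I\cap J|$ positions (one fewer when $N\in I\cap J$, since $\partial_{N,N}f_{\ell_*}=0$ needs no correction) and $M$ of rank at most $1$ (at most $2$ when $N\in I\cap J$), so $\operatorname{rank} H^{(I,J)}\le 1+|I\cap J|\le m-1$; vanishing of the determinant then extends to all of $\mathbb{R}^N$ by continuity. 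Your decomposition exposes a bit more structure — a sharp rank bound rather than just a single relation between two rows — while the paper's version stays entirely pointwise and never divides by $\sin^2(n_{\ell_*}x_p)$; both rest on the same tensor-product factorization of the off-diagonal Hessian entries. One small remark worth making explicit: the claim that $M$ is a single outer product $vw^T$ in the case $N\notin I\cap J$ is safe precisely because $N$ then lies in at most one of $I$, $J$; if both $v_N$ and $w_N$ were in play simultaneously the product $v_Nw_N$ would contradict $M_{NN}=0$, but the hypothesis rules this out.
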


\begin{proof}
We exhibit a dependence relation between two rows in $H^{(I,J)}$ corresponding to distinct indices $i_1, i_2 \in I\setminus J$.  In particular, let $i_1,i_2\in I\setminus J$ be given with $i_1\neq i_2$, set $f:=f_n=f_{n_{\ell_*}}$, and, for and $x\in \mathbb{R}^N$, let 
\begin{align*}
v_k(x)=\Big(\partial_{i_k,j}f\Big)_{j\in J}\in\mathbb{R}^{m},\quad k=1,2,
\end{align*}
denote the rows of $H^{(I,J)}$ corresponding to indices $i_1$ and $i_2$.  We consider three cases:

\vspace{0.1in}

\noindent \underline{Case 1}: $N\not \in \{i_1,i_2\}\cup J$.

\vspace{0.1in}

In this case, we obtain
\begin{align}
v_1(x)&=\Big(n^2x_N\sin(2nx_{i_1})\sin(2nx_{j})\widehat{S}_{i_1,j}\Big)_{j\in J}\label{eq-v1}
\end{align}  
and
\begin{align}
v_2(x)&=\Big(n^2x_N\sin(2nx_{i_2})\sin(2nx_{j})\widehat{S}_{i_2,j}\Big)_{j\in J}.\label{eq-v2}
\end{align} 

Direct calculation now shows that we have the identity
\begin{align}
\Big(\sin(2nx_{i_2})\sin^2(nx_{i_1})\Big)v_1(x)-\Big(\sin(2nx_{i_1})\sin^2(nx_{i_2})\Big)v_2(x)=0\label{eq-identity-1}
\end{align}
which immediately gives \eqref{eq-det-goal}, completing the proof of the Lemma in Case $1$.

\vspace{0.1in}

\noindent \underline{Case $2$}:  $N\in \{i_1,i_2\}$.

\vspace{0.1in}

In this case, we have $N\not\in J$.  Assume without loss of generality that $i_1=N$.  We then get the identity
\begin{align*}
v_1(x)&=\Big(n\sin(2nx_{j})\widehat{S}_{j}\Big)_{j\in J}.
\end{align*}
Moreover, since $i_2\neq N$ (recall that $i_1$ and $i_2$ are distinct), $v_2$ satisfies the equality given in \eqref{eq-v2}.  This leads to
\begin{align*}
\Big(nx_N\sin(2nx_{i_2})\Big)v_1(x)-\Big(\sin^2(nx_{i_2})\Big)v_2(x)=0,
\end{align*}
which completes the proof of the lemma in this case.

\vspace{0.1in}

\noindent \underline{Case $3$}: $N\in J$.

\vspace{0.1in}

In this case, we note that $i_1,i_2\in I\setminus J$ implies $i_1\neq N$ and $i_2\neq N$.  Assume without loss of generality that $a_m=N$.  We then obtain
\begin{align*}
v_k(x)&=\Big(v_{k,j}(x)\Big)_{j\in J}\quad\textrm{for}\quad k=1,2,
\end{align*}
where, for each $j\in J$, the quantity $v_{k,j}(x)$ is given by
\begin{align*}
v_{k,j}(x)=\left\lbrace\begin{array}{ll}n^2x_N\sin(2nx_{i_k})\sin(2nx_{j})\widehat{S}_{i_k,j},&\quad\textrm{if}\,\, j\neq N,\\
n\sin(2nx_{i_k})\widehat{S}_{i_k},&\quad\textrm{if}\,\, j=N.\end{array}\right.
\end{align*}
The identity \eqref{eq-identity-1} now follows as in Case $1$ above, which completes the proof of Lemma $\ref{lem-2}$ in Case $3$.
\end{proof}

Having shown Lemma \ref{lem-2}, it remains to estimate the contributions of $J\in C(m)$ to \eqref{eq-laplace-applied} with $\# (I\cap J) =m$ and $\#(I\cap J) = m-1$. 

\begin{remark}
\label{rmk1}
For all $J\in C(m)$ there exists an integer $\alpha\leq m$ and a sequence of coefficients $(c_z)\subset\mathbb{C}$ such that
\begin{align}
\det(H^{(I,J)})=\sum_{z\in \Lambda} c_z e^{2n_{\ell_*}iz\cdot \widehat{x}} (x_N)^{\alpha},\label{eq-lem3-det-expansion}
\end{align}
with $\widehat{x}=(x_1,x_2,\cdots,x_{N-1})$ and
\begin{align}
\Lambda=\bigg\{z\in \mathbb{Z}^{N-1}: |z_i|\leq N-1 \,\, \textrm{for}\,\, i=1,\cdots,N-1\bigg\},\label{def-Z}
\end{align} 
and with
\begin{align}
|c_z|\leq Cn_{\ell_*}^{2m}\,\, \textrm{for all}\,\, z\in \Lambda.\label{eq-bound-cz}
\end{align}
\end{remark}

\begin{proof}  Let $J\in C(m)$, and fix $1\le j \le N-1$. 
Each entry in the matrix $H^{(I,J)}$
is a polynomial of degree $1$ in the pair of expressions $e^{\pm 2 inx_j}$.
It follows that for every such $j$, $\det(H^{(I,J)})$
is a polynomial of degree at most $m$ in $e^{\pm 2 inx_j}$.
Because $m\le N-1$, we obtain a sum over the set $\Lambda$. 

We can likewise characterize the degree of each entry of
$\det(H^{(I,J)})$ in the variable $x_N$.  The full matrix
$H$ has the factor $x_N$ in each entry except in the last
row and column, where there is no factor of $x_N$.  It
follows that $x_N$ appears in $\det(H^{I,J})$ to the power 
$\alpha= m-2$, $m-1$, or $m$, depending on whether the $N$th column
is present in both of $I$ and $J$, one of them, or neither.

Finally, since each entry in $\det(H^{(I,J)})$ is controlled by
$(n_{\ell_*})^2$, one obtains $|\det(H^{(I,J)})|\leq
(n_{\ell_*})^{2m}$.  This implies $|c_z|\leq C(n_{\ell_*})^{2m}$ as
desired.
\end{proof}

The next lemma shows that the constant Fourier coefficient vanishes for 
the remaining contributions to \eqref{eq-laplace-applied}.

\begin{lemma}
\label{lem-3}
Let $N$, $I$ and $m$ be as above, and let $J\in C(m)$ be given.  If either of the conditions
\begin{enumerate}
\item[(a)] $I=J$ and $N\not\in I$, or
\item[(b)] $\#(I\cap J)=m-1$,
\end{enumerate}
are satisfied, then $c_{(0,0,\cdots,0)}=0$.
\end{lemma}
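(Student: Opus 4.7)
\textbf{Proof plan for Lemma \ref{lem-3}.} The plan is to identify $c_{(0,\dots,0)}$ (up to a positive normalization) with the mean of $\det(H^{(I,J)})$ over a period in $\hat x=(x_1,\dots,x_{N-1})$, and to show that this mean vanishes by different cancellation mechanisms in the two cases.

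In case (a), I write $f_n(x)=x_N F(\hat x)$ with $F(\hat x)=\prod_{k=1}^{N-1}\sin^2(nx_k)$ and factor $F=F_I F_R$, where $F_I=\prod_{k\in I}\sin^2(nx_k)$ depends only on $\hat x_I=(x_k)_{k\in I}$ and $F_R=\prod_{k\in R}\sin^2(nx_k)$, $R=\{1,\dots,N-1\}\setminus I$, depends only on $\hat x_R$. For $i,j\in I$ one has $\partial_{i,j}f_n=x_N F_R\,\partial_{i,j}F_I$, so pulling the common factor $x_N F_R$ out of each of the $m$ rows gives
\[
\det H^{(I,I)}=x_N^m\,F_R(\hat x_R)^m\,(\det D^2 F_I)(\hat x_I),
\]
the last factor being the genuine $m$-dimensional Hessian determinant of $F_I$ regarded as a function on $\mathbb{R}^I$. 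Because the cofactor matrix of any Hessian is row-divergence-free, $\det D^2 F_I$ is in divergence form; hence $\int_{\mathbb{T}^I}\det D^2 F_I\,d\hat x_I=0$, and Fubini (using that $F_R^m$ depends only on $\hat x_R$) yields $\int_{\mathbb{T}^{N-1}}\det H^{(I,I)}\,d\hat x=0$.

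In case (b), write $\{i^*\}=I\setminus J$ and $\{j^*\}=J\setminus I$. Since $i^*\in I$ and $j^*\in J$ are distinct, they cannot both equal $N$, so at least one of them lies in $\{1,\dots,N-1\}$; fix such an index $p$. I would then verify by direct inspection of the entries $\partial_{i,j}f_n$ the following structural claim: every entry of the $p$-indexed row of $H^{(I,J)}$ (when $p=i^*$), respectively column (when $p=j^*$), carries $\sin(2nx_p)$ as a factor, while every nonzero entry outside this distinguished row/column retains the factor $\sin^2(nx_p)$ inherited from $f_n$ (since in such entries the two differentiations involve only indices $\neq p$, leaving the factor $\sin^2(nx_p)$ in $f_n$ untouched). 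In the Leibniz expansion of the determinant, each nonzero permutation term therefore has the form
\[
\sin(2nx_p)\,\sin^{2(m-1)}(nx_p)\cdot G_\sigma(x_N,\hat x_{\neq p}),
\]
with $G_\sigma$ independent of $x_p$. Since $\sin(2ny)\sin^{2(m-1)}(ny)=\tfrac{1}{mn}\tfrac{d}{dy}\sin^{2m}(ny)$ is a total derivative in $y$, integrating first in $x_p$ annihilates every Leibniz term, giving $\int_{\mathbb{T}^{N-1}}\det H^{(I,J)}\,d\hat x=0$, and hence $c_{(0,\dots,0)}=0$.

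The main obstacle is the structural claim in case (b); it must be verified uniformly across the four sub-cases determined by whether $N$ lies in $I\cap J$, $I\setminus J$, $J\setminus I$, or neither. In the sub-case $N\in I\cap J$ one must also note that $\partial_{N,N}f_n=0$, so any permutation fixing $N$ contributes a zero Leibniz term and does not need separate treatment. Once this uniform factor extraction is in place, both cases reduce to a short one-line integration.
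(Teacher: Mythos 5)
Your proof is correct, and the two cases are handled at slightly different levels of generality compared to the paper. For case (a), the paper factors $\det H^{(I,I)}=(\widehat{S}_I x_N)^m\det(G)$ exactly as you do, but then it computes $\det(G)$ \emph{explicitly} via Chen's tensor-product formula \eqref{hess-identity} and verifies the vanishing of the mean through the pointwise trigonometric identity $\int_0^{2\pi}\sin^{2(m-1)}(nx)\cos^2(nx)\,dx=\tfrac{1}{2m}\int_0^{2\pi}\sin^{2(m-1)}(nx)\,dx$. Your argument instead invokes the general fact that $\det D^2 F_I$ is a divergence of a periodic field (a consequence of Morrey's identity, already recorded in Appendix A of the paper) and so has zero mean on the torus; this is more robust --- it does not rely on the specific $\sin^2$ structure of $F_I$ --- and is arguably cleaner, though it requires you to be explicit that the cofactor entries of $D^2 F_I$ are themselves $2\pi$-periodic. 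For case (b), your Leibniz-expansion bookkeeping is simply a regrouping of the paper's cofactor expansion along the column $j_*$ (or row $i_*$): each permutation term carries one factor $\sin(2nx_p)$ from the distinguished line and factors $\sin^2(nx_p)$ from the remaining $m-1$ entries, and the total derivative $\sin(2nx_p)\sin^{2(m-1)}(nx_p)=\tfrac{1}{mn}\tfrac{d}{dx_p}\sin^{2m}(nx_p)$ kills the $x_p$-integral. One small point: when you transfer the argument from a row to a column you are implicitly using that $H^{(I,J)}$ is a submatrix of a symmetric matrix (all its entries are $\partial_{i,j}f_{\ell_*}$ with the same $\ell_*$), so $\det H^{(I,J)}=\det H^{(J,I)}$; this is what makes the ``at least one of $i_*,j_*$ lies in $\{1,\dots,N-1\}$'' case split genuinely without loss of generality, and is worth stating when you write this up.
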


\begin{proof}
We will show that each of the conditions (a) and (b) imply the equality
\begin{align}
\int_{[0,2\pi]^{N-1}} \det(H^{(I,J)}) d\widehat{x}=0.\label{eq-f0-goal}
\end{align}

Suppose that condition (a) holds, i.e. $I=J$ and $N\not\in I\cap J=I$.  We then have
\begin{align*}
\det(H^{(I,J)})=\det(H^{(I,I)})=(\widehat{S}_Ix_N)^{m}\det(G),
\end{align*}
where $G$ is the Hessian matrix of the function $h:\mathbb{R}^m\rightarrow\mathbb{R}$ defined on the $m$ variables $x_I=(x_{i})_{i\in I}$ by
\begin{align*}
h(x_{I})&=\widehat{S}_{I'}=\prod_{i\in I}\sin^2(nx_{i}).
\end{align*}

Using the formula \eqref{hess-identity}, we therefore obtain
\begin{align*}
\det(G)=(-2n^2)^m\bigg(\prod_{i\in I} \sin(nx_{i})\bigg)^{2(m-1)}\bigg(1-2\sum_{j\in I} \cos^2(nx_{j})\bigg).
\end{align*}

To conclude the argument in this case, we compute
\begin{align}
\int_{[0,2\pi]^m} \bigg(\prod_{i\in I} \sin(nx_{i})\bigg)^{2(m-1)}\bigg(1-2\sum_{j\in I} \cos^2(nx_{j})\bigg) \,dx_{I}=0.\label{eq-f0-1}
\end{align}
Indeed, \eqref{eq-f0-1} follows immediately from the multiplicative structure of the integrand and the identity
\begin{align*}
\int_0^{2\pi} \sin^{2(m-1)}(nx)\cos^2(nx)\,dx&=\frac{1}{2m}\int_0^{2\pi} \sin^{2(m-1)}(nx)\,dx,
\end{align*}
which is the result of an elementary computation using integration by parts.  The equality \eqref{eq-f0-goal} therefore holds as desired.

We now turn to the case of condition (b).  Let $i_*$ be the singleton element of $I\setminus J$ and $j_*$ be the singleton element of $J\setminus I$.  It follows from $i_*\neq j_*$ that at least one of the two indices $i_*$ and $j_*$ is not equal to $N$.  Suppose without loss of generality that $j_*\neq N$.  We may then write, for some sequence $(\sigma_i)\subset \{-1,1\}$,
\begin{align*}
\det(H^{(I,J)})&=\sum_{i\in I} \sigma_ih_i
\end{align*}
with
\begin{align*}
h_i:=(\partial_{i,j_*}f_{\ell_*})\det(H^{(I\setminus\{i\},J\setminus \{j_*\})}),\quad i\in I.
\end{align*}

Note that we have $i\neq j_*$ for all $i\in I$.  Then, recalling that $j_*\neq N$ by hypothesis, for each $i\in I$ we have
\begin{align*}
\det(H^{(I\setminus \{i\},J\setminus \{j_*\})})=\sin^{2m-2}(nx_{j_*})g(x)
\end{align*}
for some function $g:\mathbb{R}^N\rightarrow \mathbb{R}$ independent of the variable $x_{j_*}$.  

Recalling the definition \eqref{def-f-ell} of $f_{\ell_*}$ we therefore have, for every $i=1,\cdots,m$, 
\begin{align*}
h_i&=\Big(\sin(2nx_{j_*})\sin^{2m-2}(nx_{j_*})\Big)\widetilde{g}(x)
\end{align*}
where $\widetilde{g}:\mathbb{R}^N\rightarrow\mathbb{R}$ is independent of the variable $x_{j_*}$.  It now follows that each $h_i$ is an odd function in the variable $x_{j_*}$, so that since $i\in I$ was arbitrary, the equality \eqref{eq-f0-goal} is satisfied.  This completes the proof of Lemma $\ref{lem-3}$.
\end{proof}

Having established Lemma $\ref{lem-2}$ and Lemma $\ref{lem-3}$, we can
now conclude the proof of Proposition $\ref{prop-Hbound-3}$.  In view of
\ref{lem-3}, all remaining contributions to \eqref{eq-laplace-applied} can
be estimated by integration by parts.

\begin{proof}[Proof of Proposition $\ref{prop-Hbound-3}$] 

Recall that we have fixed $k$, and that
$\mathscr{L}$ is the set of all $N$-tuples $(\ell_1,\cdots,\ell_N)\in \{1,\cdots,k\}^N$ 
with not all of the $\ell_i$ equal.  We have also fixed 
${\boldsymbol \ell}=(\ell_1,\ell_2,\cdots,\ell_N)\in\mathscr{L}$, defined $\ell_*$ and 
$I= I_{\boldsymbol \ell}$ in \eqref{plus1} and \eqref{plus2}, and set $m=\# I$.  
With this notation in hand, we have
\begin{align*}
\eqref{eq-laplace-applied}&\leq \bigg(\prod_{i=1}^N \frac{1}{n_{\ell_i}^{2-(2/N)}}\bigg)\sum_{J\in C(m)} \Big|\int \det(H^{(I,J)})\det(H^{(I',J')})\varphi \,dx\Big|
\end{align*}
Collecting \eqref{eq-group1-bound}, Lemma \ref{lem-2}, Remark \ref{rmk1}, and Lemma \ref{lem-3}, the right-hand side of this expression is bounded by 
\begin{align}
\frac{C\lVert \varphi\rVert_{L^\infty}}{k^{2N}}+C(n_{\ell_*})^{2m/N}\bigg(\sup_{\substack{J\in C(m)\\z\in \Lambda\setminus\{0\}}} \bigg|\int  e^{2n_{\ell_*}iz\cdot \widehat{x}}(x_N)^{\alpha}\det(H^{(I',J')})\varphi \,dx\bigg|\bigg).\label{eq-5-26}
\end{align}
where $C>0$ is a dimensional constant, and where we have again used the bound $C_{\boldsymbol\ell}\leq C(n_{\ell_*})^{-(2-(2/N))m}$.

We estimate each of the integrals in the supremum.  Let $z=(z_1,\cdots,z_{N-1})\in \Lambda\setminus\{0\}$ be given, and choose $j\in \{1,\cdots,N-1\}$ such that $z_j\neq 0$.  Integrating by parts two times in the $x_j$ variable, we obtain the identity
\begin{align}
\nonumber &\int e^{2n_{\ell_*}iz\cdot \widehat{x}}(x_N)^\alpha\det(H^{(I',J')})\varphi \,dx\\
&\hspace{0.4in}=-\frac{1}{4(n_{\ell_*})^2(z_j)^2}\int e^{2n_{\ell_*}iz\cdot \widehat{x}}(x_N)^\alpha\partial_{j}^2\Big[\det(H^{(I',J')})\varphi\Big] \,dx.\label{eq-zterm}
\end{align}
To bound the right-hand side of \eqref{eq-zterm}, we note that 
\begin{align*}
|\partial_{x_j}^2\det(H^{(I',J')})|&\leq C(n_{\ell_*-1})^{2(N-m)+2}\leq C(n_{\ell_*-1})^{2N}
\end{align*}
(here and elsewhere, $C$ is a dimensional constant, which may vary from line to line).
It follows that
\begin{align}
|\eqref{eq-zterm}|\leq \frac{C(n_{\ell_*-1})^{2N}}{(n_{\ell_*})^2}\label{eq-ztermbound}
\end{align}
This estimate then leads to
\begin{align*}
\eqref{eq-5-26}&\leq \frac{C\lVert \varphi\rVert_{L^\infty}}{k^{2N}}+\frac{C(n_{\ell_*-1})^{2N}}{(n_{\ell_*})^{2-2m/N}}\leq \frac{C\lVert \varphi\rVert_{L^\infty}}{k^{2N}}+\frac{C(n_{\ell_*-1})^{2N}}{(n_{\ell_*})^{2/N}}\leq \frac{C\lVert \varphi\rVert_{C^2}}{k^{2N}},
\end{align*}
completing the proof of Proposition \ref{prop-Hbound-3} and hence of Proposition \ref{prop3} and Theorem $\ref{thm-opt}$.
\end{proof}

\appendix

\section{Divergence identities}

\label{app-det-id}

In this appendix, we review some well known identities for the Jacobian and Hessian determinants.  We begin by recalling two formulas involving derivatives of the determinant.  The first formula is an instance of the product rule: given $N$ functions $f_1,f_2,\cdots,f_N\in C^1(\mathbb{R}^N;\mathbb{R}^N)$, one has
\begin{align}
\partial_{x_i}\det(f_1,f_2,\cdots,f_N)=\sum_{j=1}^{N} \det(f_1,\cdots,f_{j-1},\partial_{x_i}f_j,f_{j+1},\cdots,f_N)\label{det-product-1}
\end{align}
for each $i\in \{1,\cdots,N\}$.  

The second formula is a closely related expression for integration by parts.  Fix $i\in \{1,\cdots,N\}$.  Multiplying both sides of \eqref{det-product-1} by $\psi\in C_c^1(\mathbb{R}^N)$, integrating in the $x_i$ variable, and using integration by parts, we obtain
\begin{align}
\nonumber &-\int_{\mathbb{R}} \det(f_1,\cdots,f_N)(\partial_{x_i}\psi) \,dx_i\\
&\hspace{0.6in}= \sum_{j=1}^{N}\int_{\mathbb{R}} \det(f_1,f_2,\cdots,f_{j-1},\partial_{x_i}f_j,f_{j+1},\cdots,f_N)\psi \,dx_i.\label{det-parts-1}
\end{align}

Next, recall that if $f = (f_1,\dots,f_k):\mathbb{R}^k\rightarrow\mathbb{R}^k$,
then
\begin{align}
\det(Df)\,dx_1\wedge dx_2\wedge\cdots\wedge dx_k&=df_1\wedge df_2\wedge \cdots \wedge df_k\label{eq-df-0}\\
&=(-1)^{i+1}d\Big(f_i \,(df_1\wedge \cdots \wedge \widehat{(df_i)}\wedge \cdots \wedge df_{k})\Big)\label{eq-df-1}
\end{align}
for each $i=1,\cdots,k$, in which the second equality follows from the product/Leibniz rule for differential forms and $d^2=0$.  Integrating \eqref{eq-df-1} against a test function $\phi\in C^\infty_c(\mathbb{R}^k)$ then gives
\begin{align}
\nonumber \int_{\mathbb{R}^k} \det(Df)\phi \,dx&=(-1)^{i+1}\int_{\mathbb{R}^k} \phi d\Big(f_i\, (df_1\wedge \cdots\wedge \widehat{(df_i)}\wedge\cdots \wedge df_k)\Big)\\
&=(-1)^i\int_{\mathbb{R}^k} f_i\, d\phi\wedge df_1\wedge\cdots\wedge \widehat{(df_i)}\wedge \cdots\wedge df_k.\label{eq-app-1-1}
\end{align}

\subsection*{Morrey's identity \cite[\S4.4]{Mo}}
Let $f:\mathbb{R}^k\rightarrow\mathbb{R}^k$, then 
\begin{align}
\sum_{j=1}^k \partial_j[C_{ij}(Df)]=0\label{eq-app-a-cofactor}
\end{align}
for all $i=1,\cdots,k$, where $C_{ij} = C_{ij}(Df)$ denotes the $(i,j)$th cofactor of the matrix $Df\in \mathbb{R}^{k\times k}$ with entries $(Df)_{ij}=\partial_jf_i$.  

In the interest of completeness, we record the compact proof of this identity, given in \cite[Lemma $2.15$]{AFP}.  Fix, $i$ and define
\begin{align*}
\eta_i=df_1\wedge \cdots \wedge df_{i-1}\wedge df_{i+1}\wedge\cdots df_k,
\end{align*}
and
\begin{align*}
\omega_j=dx_1\wedge \cdots \wedge dx_{j-1}\wedge dx_{j+1}\wedge \cdots \wedge dx_k,\quad 1\leq j\leq k.
\end{align*}
Then, 
\begin{align*}
\eta_i=\sum_{j=1}^k (-1)^{i+j}C_{ij}\omega_j.
\end{align*}
Taking the exterior derivative of both sides (and noting that $d^2=0$ implies $d\eta_i=0$, as well as $d\omega_j=0$ for all $1\leq j\leq k$), we obtain
\begin{align*}
0=\sum_{j=1}^k (-1)^{i+j}dC_{ij}\wedge \omega_j&=\sum_{j=1}^k (-1)^{i+j}(\partial_jC_{ij})\, dx_j\wedge \omega_j\\
&=(-1)^i\sum_{j=1}^k (\partial_jC_{ij})\, dx_1\wedge \cdots \wedge dx_k.
\end{align*}
giving \eqref{eq-app-a-cofactor} as desired.

\subsection*{The Brezis-Nguyen extension identity}

We now use the discussion of the previous section to establish the identity \eqref{eq-10}.  Recall that this is precisely the statement of \cite[Lemma $3$]{BN1}; we repeat their proof here for completeness.  

Let $u\in C_c^2(\mathbb{R}^N)$ and let $U$ be its extension to $\mathbb{R}^{N+1}$ as in the statement of Lemma $\ref{lem1}$.  Set $g_i=\partial_{i}u$ and $G_i=\partial_iU$ for $i=1,\cdots,N$.  Then
\begin{align*}
\int_{\mathbb{R}^N} \det(Dg)(x)\varphi(x)\,dx&=-\int_{\mathbb{R}^N\times (0,1)} \partial_{x_{N+1}}[\det(D_xG)\Psi]\,dxdx_{N+1},
\end{align*}
and
\begin{align*}
\partial_{x_{N+1}}[\det(DG)\Psi]=\partial_{N+1}\det(DG)\Psi+\det(DG)\partial_{N+1}\Psi.
\end{align*}
Moreover, defining $F:\mathbb{R}^{N+1}\rightarrow \mathbb{R}^{N+1}$ by $F(x)=(G_1(x),\cdots,G_N(x),1)$ and applying \eqref{eq-app-a-cofactor} with $i=k=N+1$, one obtains
\begin{align*}
\partial_{N+1}\det(DG)&=\partial_{N+1}C_{N+1,N+1}(DF)\\
&\hspace{0.4in}=-\sum_{j=1}^{N} \partial_j[C_{N+1,j}(DF)]=\sum_{j=1}^N (-1)^{N-j}\partial_j\det(R(j)),
\end{align*}
where we have defined $R(j):\mathbb{R}^{N+1}\rightarrow\mathbb{R}^{N\times N}$, $1\leq j\leq N$, in terms of its rows as $(R(j))_m=\widehat{\nabla}_j G_m$ for $1\leq m\leq N$, using the notation $\widehat{\nabla}_i$ established in the proof of Lemma $\ref{lem1}$ (see \eqref{def-rmi} and the surrounding discussion).  Subsequent applications of integration by parts on each term then establish \eqref{eq-10} as desired.

\subsection*{Hessian identity}
We conclude this appendix by giving an integration by parts identity for the Hessian determinant.  Let $h,\varphi\in C^\infty_c(\mathbb{R}^N)$ be given.  Then
\begin{align}
\nonumber &\int_{\mathbb{R}^N} \det(D^2h)\varphi \,dx\\
\nonumber &\hspace{0.2in}=-\frac{1}{N!}\sum_{\sigma,\tau\in S_N} \sgn(\sigma)\sgn(\tau)\bigg(\int_{\mathbb{R}^N} (\partial_{\sigma(1)}h)(\partial_{\tau(1)}h)(\partial_{\sigma(2),\tau(2)}\varphi)\\
&\hspace{2.4in} \cdot(\partial_{\sigma(3),\tau(3)}h)\cdots (\partial_{\sigma(N),\tau(N)}h)\,dx\bigg).\label{parts-det-hessian}
\end{align}
Taking $f=\nabla h$, this identity is proved from \eqref{eq-app-1-1} by a second application of integration by parts, combined with symmetry considerations which lead to the ``canonicalized'' form given here.  It was used to define an earlier generalized notion of the Hessian determinant operator in \cite{I} (in integer-order Sobolev spaces); see also \cite{FM} and the references cited there.

\section{Boundedness of the extension operator}
\label{app-b}

In this appendix we establish 
$B(2-\frac{2}{N},N)\rightarrow
W^{2,N}(\mathbb{R}^{N+2})$ bounds for the extension operator $E$
defined in \eqref{def-extension}.  Such bounds are due to
E.~M. Stein (see \cite{S1,S2}).  We present the bounds here for
completeness, noting, in particular, that because more than 
one derivative is involved, one must impose a vanishing moment 
condition on the approximate identity $\phi$.

Define 
\begin{equation}\label{def-extension}
(Ef)(x,\xi)=\eta(\xi)\int_{\mathbb{R}^N} f(x-|\xi|y)\phi(y)\,dy
\end{equation}
for $x\in\mathbb{R}^N$ 
and $\xi\in\mathbb{R}^2$, where $\eta\in C_c^\infty(\mathbb{R}^2)$ and $\phi\in C_c^\infty(\mathbb{R}^N)$  are given such that
\begin{enumerate}
\item $0\leq \eta(\xi)\leq 1$ for all $\xi\in\mathbb{R}^2$, 
\item $\supp \eta\subset B(0;1)$, $\eta(0)=1$, and
\item $\int \phi \,dx=1$, $\supp \phi\subset B(0;1)$ and $\int x\phi(x)\,dx=0$.
\end{enumerate}

To simplify notation, we will write $(x,\xi)\in\mathbb{R}^N\times\mathbb{R}^2\cong \mathbb{R}^{N+2}$ with the conventions $x_{N+1}=\xi_1$ and $x_{N+2}=\xi_2$.  We obtain the following estimates:

\begin{proposition}
\label{prop-extension}
Fix $N\geq 3$ and, for each $f\in C^\infty_c(\mathbb{R}^N)$, let $Ef$ be defined by \eqref{def-extension}.  Then there exists $C>0$ such that 
\begin{align*}
\max_{|\alpha|\leq 2}\,\, \lVert D^\alpha Ef \rVert_{L^N(\mathbb{R}^{N+2})}&\leq C\lVert f\rVert_{2-\frac{2}{N},N}.
\end{align*}
where the maximum is taken over all multi-indices $\alpha$ with $|\alpha|\leq 2$.
\end{proposition}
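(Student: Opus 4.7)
The plan is as follows. The bounds for $|\alpha|\le 1$ are straightforward: the embedding $B(2-\frac{2}{N},N)\subset W^{1,N}$ puts $f$ and $\nabla f$ in $L^N(\mathbb{R}^N)$, so Young's inequality for convolutions in $x$ together with compactness of $\supp\eta$ in $\mathbb{R}^2$ handles the $|\alpha|=0$ case and the $|\alpha|=1$ case in which the derivative falls on $x$. When the $|\alpha|=1$ derivative falls on $\xi$, I would use $\int\phi\,dy=1$ to write $\partial_{\xi_k}(f*\phi_r)(x)=-\frac{\xi_k}{|\xi|^2}\int[f(x-|\xi|y)-f(x)](N\phi+y\cdot\nabla\phi)(y)\,dy$ (the centering being justified by $\int(N\phi+y\cdot\nabla\phi)\,dy=0$) and Taylor's inequality $\|f(\cdot-h)-f\|_{L^N}\le |h|\,\|\nabla f\|_{L^N}$ to absorb the $1/|\xi|$.

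For the main case $|\alpha|=2$, I would treat three subcases by pushing derivatives onto the bump $\phi$ via the identity $\partial_{x_m}[f(x-|\xi|y)]=-|\xi|^{-1}\partial_{y_m}[f(x-|\xi|y)]$ and exploiting mean-zero properties to center first-order differences. In the pure $x$-case, one integration by parts in $y$ together with $\int\partial_i\phi\,dy=0$ gives
\begin{equation*}
\partial_{x_ix_j}Ef(x,\xi)=\frac{\eta(\xi)}{|\xi|}\int_{\mathbb{R}^N}\bigl[\partial_j f(x-|\xi|y)-\partial_j f(x)\bigr]\partial_i\phi(y)\,dy.
\end{equation*}
In the mixed case, a product-rule expansion produces a harmless $(\partial_k\eta)$-term plus $-\eta\,\xi_k/|\xi|^2$ times $\int[\partial_i f(x-|\xi|y)-\partial_i f(x)](N\phi+y\cdot\nabla\phi)\,dy$; since $|\xi_k|/|\xi|^2\le 1/|\xi|$, this matches the Case~1 form. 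In the pure $\xi$-case, naive differentiation leaves a factor $|\xi|^{-2}$ multiplying only an $f$-difference; to recover the $|\xi|^{-1}$-times-$\nabla f$-difference structure, I would rewrite $N\phi+y\cdot\nabla\phi=\sum_j\partial_{y_j}(y_j\phi)$ and integrate by parts once more in $y$, yielding $\frac{1}{|\xi|}\int[\nabla f(x-|\xi|y)-\nabla f(x)]\cdot y\,\phi(y)\,dy$, where the second centering uses precisely the vanishing-moment hypothesis $\int y\phi(y)\,dy=0$.

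Once all three subcases are reduced to an expression of the form $\frac{\eta(\xi)}{|\xi|}\int[\nabla f(x-|\xi|y)-\nabla f(x)]\Psi(y)\,dy$ with $\Psi\in C_c^\infty$, I would conclude in three steps: (i) by Minkowski in $y$ and H\"older against the finite measure $|\Psi(y)|\,dy$,
\begin{equation*}
\|\partial^\alpha Ef(\cdot,\xi)\|_{L^N_x}^N \le \frac{C}{|\xi|^N}\int|\Psi(y)|\,\|\nabla f(\cdot-|\xi|y)-\nabla f\|_{L^N_x}^N\,dy;
\end{equation*}
(ii) integrate over $\xi\in B(0,1)\subset\mathbb{R}^2$ and swap the order of integration with $y$; (iii) pass to polar coordinates and substitute $h=|\xi|y$ to obtain
\begin{equation*}
C\int_{|h|<1}\frac{\|\nabla f(\cdot-h)-\nabla f\|_{L^N}^N}{|h|^{2N-2}}\,dh,
\end{equation*}
which, since $N+\sigma p=N+(N-2)=2N-2$ with $p=N$ and $\sigma=1-\frac{2}{N}$, is exactly $[f]_{B(2-\frac{2}{N},N)}^N$.

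The main obstacle is the pure $\xi$-subcase. Without the additional integration by parts, one obtains only $\|\partial_{\xi_k\xi_l}Ef(\cdot,\xi)\|_{L^N_x}\le C|\xi|^{-1}\|\nabla f\|_{L^N}$, whose $N$-th power integrates to a multiple of $\int_{|\xi|<1}|\xi|^{-N}\,d\xi$, which diverges in $\mathbb{R}^2$ when $N\ge 3$. The cancellation needed for integrability comes solely from the vanishing first moment of $\phi$, and this is the only place in the argument where that condition is used; it is the phenomenon flagged in the paragraph preceding Proposition~\ref{prop-extension}.
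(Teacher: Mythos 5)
Your proposal is correct and takes essentially the same approach as the paper: the same case split on where the two derivatives land, the same use of centering via vanishing moments (with the first-moment condition $\int y\phi\,dy=0$ doing the critical work in the pure-$\xi$ case), and the same Fubini/substitution reduction, using $\int_{|\xi|\ge |h|}|\xi|^{-2N}\,d\xi\sim |h|^{2-2N}$ over $\mathbb{R}^2$, to arrive at the $B(2-\tfrac{2}{N},N)$ seminorm. The only difference is organizational: the paper expands the pure-$\xi$ derivative as the four terms $(E)_1,\dots,(E)_4$ and uses the moment condition in $(E)_3$, and it handles only the pure second derivatives explicitly (noting that the mixed case follows by the same considerations or by interpolation), whereas you redistribute the integrations by parts slightly and treat all three subcases directly.
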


\begin{proof}
In what follows, we give estimates for the quantities $\lVert Ef\rVert_{L^N(\mathbb{R}^{N+2})}$ and $\lVert \partial_{x_i}^2Ef\rVert_{L^N(\mathbb{R}^{N+2})}$ for $i=1,\cdots,N+2$.  Inspection of the arguments shows that the estimates for all $D^\alpha Ef$, $|\alpha|\leq 2$, follow from identical considerations.  Alternatively, one can appeal to standard interpolation inequalities and Calder\'on-Zygmund theory to show that the estimates on the zeroth and pure second derivatives of $Ef$ suffice to establish the claim.

To estimate the $L^N$ norm of $Ef$ itself, we apply Minkowski's integral inequality and Fubini's theorem to obtain
\begin{align}
\nonumber \lVert Ef\rVert_{L^N}&=\bigg\lVert \eta(\xi)\int_{\mathbb{R}^N} f(x-|\xi|y)\phi(y)\,dy\bigg\rVert_{L^N(\mathbb{R}^{N+2})}\\
&\leq \lVert \eta\rVert_{L^N(\mathbb{R}^2)}\lVert f\rVert_{L^N(\mathbb{R}^N)}\lVert \phi\rVert_{L^1(\mathbb{R}^N)}\label{eq-ef-bd1}
\end{align}

We now proceed to the pure second derivative estimates.  Fix $i\in \{1,2,\cdots,N\}$.  Then, for every $x\in\mathbb{R}^N$ and $\xi\in\mathbb{R}^2$,
\begin{align}
\nonumber \partial_i^2Ef(x,\xi)&=\eta(\xi)\int_{\mathbb{R}^N} (\partial_i^2f)(x-|\xi|y)\phi(y)\,dy\\
&=\frac{\eta(\xi)}{|\xi|}\int_{\mathbb{R}^N} (\partial_if)(x-|\xi|y)(\partial_i\phi)(y)\,dy\label{eq-diff}
\end{align}

Since $\phi$ has compact support, integration by parts gives $\int \partial_i\phi \,dy=0$.  We therefore obtain (with $f_i=\partial_if$, $\phi_i=\partial_i\phi$)
\begin{align}
\nonumber |\partial_i^2Ef(x,\xi)|&\leq \frac{1}{|\xi|}\int_{\mathbb{R}^N} |(f_i(x-|\xi|y)-f_i(x))\phi_i(y)|\,dy\\
\nonumber &=\frac{1}{|\xi|^{N+1}}\int_{\mathbb{R}^N} |(f_i(x-y)-f_i(x)) \phi_i(y/|\xi|)|\,dy\\
\nonumber &\leq \frac{1}{|\xi|^{1+N}}\lVert f_i(x-y)-f_i(x)\rVert_{L^N(\{y:|y|<|\xi|\})}\lVert \phi_i(\tfrac{\cdot}{|\xi|})\rVert_{L^{\frac{N}{N-1}}}\\
&\leq \frac{C}{|\xi|^2}\lVert f_i(x-y)-f_i(x)\rVert_{L^N(\{y:|y|<|\xi|\})}.\label{eq-fubini-1}
\end{align}
Integrating $|\partial_i^2Ef|^N$ with respect to $x$ and $\xi$, an application of Fubini's theorem gives
\begin{align}
\nonumber &\lVert \partial_i^2Ef\rVert_{L^N(\mathbb{R}^{N+2})}^N\\
\nonumber &\hspace{0.4in}\leq \int_{\mathbb{R}^N}\int_{\mathbb{R}^N} \bigg(\int_{|\xi|\geq |y|} |\xi|^{-2N}d\xi\bigg) |f_i(x-y)-f_i(x)|^{N} \,dxdy\\
&\hspace{0.4in}\leq C\lVert f\rVert_{2-\frac{2}{N},N}^N.\label{eq-fubini-2}
\end{align}

It remains to bound the norm of $\partial_{\xi_i}^2Ef$ for $i=1,2$.  By abuse of notation, we will abbreviate $\frac{\partial}{\partial\xi_\alpha}$ for $\alpha=1$ and $\alpha=2$ by $\partial_\alpha$.  Let $\alpha\in \{1,2\}$ be given.  We then have
\begin{align*}
\partial_\alpha^2Ef(x,\xi)&=\sum_{j=1}^4 (E)_j
\end{align*}
where we have set
\begin{align*}
(E)_1&:=\partial_\alpha^2\eta(\xi)\int f(x-|\xi|y)\phi(y)\,dy,\\
(E)_2&:=\frac{2\xi_\alpha\partial_\alpha\eta(\xi)}{|\xi|}\sum_{j=1}^N\int (\partial_jf)(x-|\xi|y)\phi(y)y_j\,dy,\\\
(E)_3&:=\frac{(|\xi|^2-(\xi_\alpha)^2)\eta(\xi)}{|\xi|^3}\sum_{j=1}^N\int (\partial_jf)(x-|\xi|y)\phi(y)y_j\,dy,\\
(E)_4&:=\frac{(\xi_\alpha)^2\eta(\xi)}{|\xi|^2}\sum_{j=1}^N\sum_{k=1}^N\int (\partial_{j,k}f)(x-|\xi|y)\phi(y)y_jy_k\,dy.
\end{align*}

The contributions of $(E)_1$ and $(E)_2$ are estimated as in \eqref{eq-ef-bd1} above, giving the bounds
\begin{align*}
\lVert (E)_1\rVert_{L^N(\mathbb{R}^{N+2})}&\leq \lVert D^2\eta\rVert_{L^N(\mathbb{R}^2)}\lVert f\rVert_{L^N(\mathbb{R}^N)}\lVert \phi\rVert_{L^1(\mathbb{R}^N)}
\end{align*}
and
\begin{align*}
\lVert (E)_2\rVert_{L^N(\mathbb{R}^{N+2})}&\leq C\lVert \nabla\eta\rVert_{L^N(\mathbb{R}^2)}\lVert \nabla f\rVert_{L^N(\mathbb{R}^N)}\lVert y\phi(y)\rVert_{L^1(\mathbb{R}^N)},
\end{align*}
respectively.  Turning to the contribution of $(E)_3$, we write
\begin{align}
\lVert (E)_3\rVert_{L^N(\mathbb{R}^{N+2})}&\leq \sum_{j=1}^N\bigg\lVert \frac{\eta(\xi)}{|\xi|}\int (\partial_jf)(x-|\xi|y)\phi(y)y_j\,dy\bigg\rVert_{L^N(\mathbb{R}^{N+2})}\label{eq-e3-rhs1}
\end{align}
so that by making use of the moment condition $\int y\phi(y)\,dy=0$ and arguing as in \eqref{eq-fubini-1}--\eqref{eq-fubini-2} we get
\begin{align}
\eqref{eq-e3-rhs1}\leq C\bigg\lVert \int \frac{(\partial_jf)(x-|\xi|y)-(\partial_jf)(x)}{|\xi|}y\phi(y)\,dy\bigg\rVert_{L^N(\mathbb{R}^{N+2})}\leq C\lVert f\rVert_{2-\frac{2}{N},N}\label{eq-e3-bd}
\end{align}

Finally,
\begin{align}
\lVert (E)_4\rVert_{L^N(\mathbb{R}^{N+2})}&=\bigg\lVert \frac{\eta(\xi)}{|\xi|}\sum_{j,k} \int (\partial_jf)(x-|\xi|y)\Phi_{j,k}(y)\,dy\bigg\rVert_{L^N(\mathbb{R}^{N+2})}\label{eq-e4}
\end{align}
where we have set $\Phi_{j,k}(y)=(\partial_k[y_jy_k\phi])(y)$ for $y\in\mathbb{R}^N$.  Noting that $\int \Phi_{j,k}(y)\,dy=0$, we obtain
\begin{align}
\eqref{eq-e4}&\leq C\bigg\lVert \int \frac{(\partial_jf)(x-|\xi|y)-(\partial_jf)(x)}{|\xi|}\Phi_{j,k}(y)\,dy\bigg\rVert_{L^N(\mathbb{R}^{N+2})}\label{eq-e4-2}
\end{align}
so that another application of the estimates in \eqref{eq-fubini-1}--\eqref{eq-fubini-2} gives the bound
\begin{align}
\eqref{eq-e4-2}&\leq C\lVert f\rVert_{2-\frac{2}{N},N}.\label{eq-fubini-3}
\end{align}

Combining the above estimates, we have shown 
\begin{align*}
\lVert \partial_{\alpha}^2Ef\rVert_{L^N(\mathbb{R}^{N+2})}\leq C\lVert f\rVert_{2-\frac{2}{N},N}.
\end{align*}
When taken together with the estimates $\lVert \partial_i^2Ef\rVert_{L^N}\leq C\lVert f\rVert_{2-\frac{2}{N},N}$ for $i=1,\cdots,N$, and in view of the remarks at the beginning of the proof, this completes the proof of Proposition \ref{prop-extension}.
\end{proof}

\section{The three-dimensional case of Proposition \ref{prop-Hbound-3}.}

In this appendix, we give a different, more direct proof of Proposition 
$\ref{prop-Hbound-3}$ in the three-dimensional case.  


\begin{proposition}
Fix $N=3$.  Let the sequences $(C_{{\boldsymbol\ell}})$ and $(H_{{\boldsymbol\ell}})$ be as in \eqref{eq-def-C}--\eqref{eq-def-D}.  Then there exists $C>0$ such that
for all ${\boldsymbol\ell}\in\mathscr{L}$, we have
\begin{align}
C_{{\boldsymbol\ell}}\bigg|\int \det(H_{{\boldsymbol \ell}})(x)\varphi(x)\,dx\bigg|
\leq \frac{C\lVert \varphi\rVert_{C^1}}{k^6}.\label{eq-3d-goal}
\end{align}
\end{proposition}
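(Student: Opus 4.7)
The key observation is that $f_\ell(x) = x_3\,g_\ell(\hat x)$ with $\hat x = (x_1,x_2)$ and $g_\ell(\hat x) = \sin^2(n_\ell x_1)\sin^2(n_\ell x_2)$, so the matrix $H_{\boldsymbol\ell}$ has $H_{3,3} = \partial_{3,3}f_{\ell_3} = 0$. Expanding $\det(H_{\boldsymbol\ell})$ by multilinearity, only four permutations in $S_3$ survive, and factoring out $x_3$ yields
\[
\det(H_{\boldsymbol\ell})(x) = x_3\,Q_{\boldsymbol\ell}(\hat x),
\]
where $Q_{\boldsymbol\ell}$ is an explicit sum of four triple products in which exactly one of the three $g$--factors in each term is differentiated twice (in $\hat x$) and the other two are differentiated once.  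Setting $\psi(x) := x_3\varphi(x)$ (so that $\|\psi\|_{C^1} \le C\|\varphi\|_{C^1}$ on $\operatorname{supp}\varphi$), I would analyze $\int Q_{\boldsymbol\ell}\,\psi\,dx$ term by term.

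Let $\ell_* = \max_i \ell_i$ and $I = \{i : \ell_i = \ell_*\}$.  Since $\boldsymbol\ell \in \mathscr{L}$, either $|I| = 1$ or $|I| = 2$.  In the easier case $|I| = 1$, each of the four terms of $Q_{\boldsymbol\ell}$ contains at most one factor at frequency $n_{\ell_*}$.  If that factor is a first derivative of $g_{\ell_*}$, a direct pointwise bound suffices; if it is a second derivative of $g_{\ell_*}$, a single integration by parts in one of the $\hat x$--variables reduces it to a first derivative, moving at most one derivative onto $\psi$ or onto a low-frequency factor.  Pointwise control of the remaining (low-frequency) factors by powers of $n_{\ell_*-1}$, combined with the super-exponential separation $n_{\ell_*} \ge n_{\ell_*-1}^{27}$ and $n_{\ell_*-1} \ge k$, then yields a bound $\le C\,n_{\ell_*}^{-1/3}n_{\ell_*-1}^{1/3}\|\varphi\|_{C^1} \le Ck^{-6}\|\varphi\|_{C^1}$.

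The harder case $|I| = 2$ is the main obstacle: two rows of $H_{\boldsymbol\ell}$ come from the same $f_{\ell_*}$, so the surviving terms contain products of two derivatives of $g_{\ell_*}$ whose naive size would exceed what $C_{\boldsymbol\ell}$ can absorb by one factor of $n_{\ell_*}$.  To recover this missing factor I would exploit the elementary identity
\[
2\,(\partial_j g_{\ell_*})\,(\partial_{jk}g_{\ell_*}) = \partial_k\!\bigl[(\partial_j g_{\ell_*})^2\bigr],
\]
which realizes the offending product as a total $x_k$-derivative of a quantity of the smaller size $n_{\ell_*}^2$.  A preliminary integration by parts lowers the second-derivative entry coming from one of the two top-frequency rows; the identity above then exposes the resulting product as a divergence, allowing one further integration by parts and a net savings of two powers of $n_{\ell_*}$.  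The bookkeeping is arranged so that, even though two integrations by parts in $\hat x$ are performed, only one ever lands on the test function $\psi$ --- the second is absorbed by the identity into the $g_{\ell_*}$ factor itself --- so the bound involves only $\|\varphi\|_{C^1}$ and not $\|\varphi\|_{C^2}$.

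In all cases each of the four terms is controlled by $Ck^{-\alpha}\|\varphi\|_{C^1}$ for some $\alpha \ge 6$, and summing gives the proposition.  The crux of the argument --- and the only place where the 3D structure is essential rather than cosmetic --- is the identity displayed above: it is what permits two integrations by parts in $\hat x$ in Case $|I|=2$ while never asking more than $C^1$ regularity of $\varphi$, giving the stronger conclusion of Appendix C relative to the main-text Proposition~\ref{prop-Hbound-3}.
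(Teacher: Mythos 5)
Your overall architecture is right and aligns with the paper: observe $\partial_{3,3}f_\ell=0$ so only four permutations survive; split by $|I|$ (you call this $|I|\in\{1,2\}$, the paper's case structure is organized by $j$ and $\sigma^{-1}(j)$ but covers the same terrain); and gain a factor of $n_{\ell_*}$ by integration by parts when pointwise bounds do not suffice. Factoring out the single $x_3$ and working with $\psi = x_3\varphi$ is a fine substitute for the paper's assumption that $\varphi$ is a tensor product.

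The gap is in the $|I|=2$ case: the identity $2\,(\partial_j g_{\ell_*})(\partial_{jk}g_{\ell_*})=\partial_k\bigl[(\partial_j g_{\ell_*})^2\bigr]$ does not cover all four surviving permutations. Writing out the Leibniz terms for $I=\{1,2\}$, the $3$-cycles $\sigma=(1\,3\,2)$ and $\sigma=(1\,2\,3)$ produce the high-frequency products $\partial_1 g_{\ell_*}\cdot\partial_{1,2}g_{\ell_*}$ and $\partial_2 g_{\ell_*}\cdot\partial_{1,2}g_{\ell_*}$, for which your identity applies. But the transpositions $\sigma=(1\,3)$ and $\sigma=(2\,3)$ produce $\partial_1 g_{\ell_*}\cdot\partial_{2,2}g_{\ell_*}$ and $\partial_{1,1}g_{\ell_*}\cdot\partial_2 g_{\ell_*}$, which are \emph{not} of the form $(\partial_j g)(\partial_{jk}g)$ for any common index $j$; as a chain-rule identity for a general $g$, your formula simply does not apply to them, and a general $g$ would not make these products into divergences. (The analogous issue occurs when $3\in I$, e.g.\ for $I=\{1,3\}$ and $\sigma=(2\,3)$ one gets $\partial_{1,1}g_{\ell_*}\cdot\partial_2 g_{\ell_*}$.) These terms really are as large as $n_{\ell_*}^{3}$ pointwise and must be integrated by parts; your proof as written has no mechanism for them.

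What rescues these terms is the tensor structure $g_{\ell_*}(\widehat{x})=\sin^2(n x_1)\sin^2(n x_2)$, not a chain-rule identity for the ambient $g$. Writing $g_{\ell_*}=g_1(x_1)g_2(x_2)$, one has $\partial_1 g_{\ell_*}\cdot\partial_{2,2}g_{\ell_*}=(g_1 g_1')(g_2 g_2'')$ and $2g_1 g_1'=\partial_1(g_1^2)$, i.e.\ $\sin^2(nx_1)\sin(2nx_1)=\tfrac{1}{2n}\partial_1\sin^4(nx_1)$, which is exactly the paper's identity \eqref{trig-identity} and gains the needed factor of $n_{\ell_*}^{-1}$ after a single integration by parts. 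Your identity is the special case of this trick in which the repeated single-variable factor is $g_k g_k'$ coming from $\partial_{jk}g$, but the transposition terms repeat $g_j$ in a different pattern ($g_j g_j'$ from $\partial_j g\cdot\partial_{jj}g$ after separating variables). To close the gap, you should either assume $\varphi$ is a tensor product (as the paper does) and identify bounded single-variable trigonometric polynomials $P_i$ that are exact derivatives in the relevant variable, or else replace your single identity by the collection of total-derivative identities available once $g_{\ell_*}$ is written as $g_1(x_1)g_2(x_2)$.
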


\begin{proof}[Proof of \eqref{eq-3d-goal}] \ 
Let $(\ell_1,\ell_2,\ell_3)\in \mathscr{L}$ be given, and suppose, without loss of generality, that $\lVert \varphi\rVert_{C^1}\leq 1$.  Because
$\partial_{3,3}f_\ell=0$ for all $1\leq \ell\leq k$, 
we may write the left-hand side of \eqref{eq-3d-goal} as
\begin{align}
\sum_{\substack{\sigma\in S_3,\\\sigma(3)\neq 3}} \frac{1}{(n_{\ell_1}n_{\ell_2}n_{\ell_3})^{4/3}(\ell_1\ell_2\ell_3)^{1/3}}\Big|\Big\langle h_{\sigma},\varphi\Big\rangle\Big|\label{eq-3d-1}
\end{align}
with
\begin{align*}
h_{\sigma}&=(\partial_{1,\sigma(1)}f_{\ell_{\sigma(1)}})(\partial_{2,\sigma(2)}f_{\ell_{\sigma(2)}})(\partial_{3,\sigma(3)}f_{\ell_{\sigma(3)}}),
\end{align*}

We estimate each term of the sum \eqref{eq-3d-1}.  Let $\sigma\in S_3$ be given with $\sigma(3)\neq 3$.  Recalling \eqref{def-f-ell}, we obtain the following identities by direct computation:
\begin{itemize}
\item[(i)] for $1\leq i,j\leq 2$ and $i\neq j$,
\begin{align*}
(\partial_{i,j}f_{\ell_j})&=n_{\ell_j}^2x_3\sin(2n_{\ell_j} x_1)\sin(2n_{\ell_j} x_2)
\end{align*}
\item[(ii)] for $1\leq i,j\leq 2$ and $i=j$,
\begin{align*}
(\partial_{i,j}f_{\ell_j})&=2n_{\ell_j}^2x_3\cos(2n_{\ell_j}x_i)\sin^2(n_{\ell_j}x_{i'})
\end{align*}
where $i'$ is the single element of $\{1,2\}\setminus \{i\}$,
\item[(iii)] for $1\leq i\leq 2$, $j=3$,
\begin{align*}
(\partial_{i,j}f_{\ell_j})&=n_{\ell_j}\sin(2n_{\ell_j}x_i)\sin^2(n_{\ell_j}x_{i'})
\end{align*}
where $i'$ is the single element of $\{1,2\}\setminus \{i\}$,
and
\item[(iv)] for $i=3$, $1\leq j\leq 2$,
\begin{align*}
(\partial_{i,j}f_{\ell_j})&=n_{\ell_j}\sin(2n_{\ell_j}x_j)\sin^2(n_{\ell_j}x_{j'}).
\end{align*}
where $j'$ is the single element of $\{1,2\}\setminus \{j\}$.
\end{itemize}

Let $j$ denote the single element of $\{1,2\}\setminus \{\sigma(3)\}$.  In view of the above identities, and recalling that $\varphi(x)$ has the form $\prod_{i=1}^3 \varphi_i(x_i)$ by hypothesis (with support contained in the cube $(0,2\pi)^N$), we conclude that there exist trigonometric polynomials $P_1$ and $P_2$ satisfying
\begin{align}
\sup_{x_1\in [0,2\pi]} |P_1(x_1)|\leq 1, \quad \sup_{x_2\in [0,2\pi]} |P_2(x_2)|\leq 1\label{eq-p1p2}
\end{align}
and chosen so that the term in \eqref{eq-3d-1} corresponding to the permutation $\sigma$ is bounded by
\begin{align}
\nonumber &\frac{n_{\ell_{j}}^{2/3}}{\displaystyle(n_{\ell_3}n_{\ell_{\sigma(3)}})^{1/3}(\ell_1\ell_2\ell_3)^{1/3}}\bigg|\bigg(\int_{\mathbb{R}} P_1(x_1)\varphi_1(x_1)\,dx_1\bigg)\\
&\hspace{1.8in} \cdot\bigg(\int_{\mathbb{R}} P_2(x_2)\varphi_2(x_2)\,dx_2\bigg)\bigg|\lVert \varphi_3\rVert_{L^\infty}.\label{eq-14}
\end{align}

Set $\ell_*=\max\{\ell_1,\ell_2,\ell_3\}$.  We consider several cases:

\vspace{0.1in}
\underline{Case 1}: $\ell_{j}\neq \ell_*$.
\vspace{0.1in}

In this case, the bounds in \eqref{eq-p1p2} imply that \eqref{eq-14} is bounded by
\begin{align*}
\frac{n_{\ell_j}^{2/3}}{(n_{\ell_3}n_{\ell_{\sigma(3)}})^{1/3}}&\leq \frac{n_{\ell_*-1}^{2/3}}{n_{\ell_*}^{1/3}}=k^\gamma
\end{align*}
with
\begin{align*}
\gamma=\frac{2}{3}(3^{3(\ell_*-1)})-\frac{1}{3}(3^{3\ell_*})=-25(3^{3\ell_*-4})\leq -6,
\end{align*}
where to obtain the last inequality we have recalled that $\ell_*\geq 2$ holds by construction.  This completes the proof of the proposition in this case.

\vspace{0.1in}
\underline{Case 2}: $\ell_{j}=\ell_*$ and $\sigma^{-1}(j)\neq j$.
\vspace{0.1in}

Set $i=\sigma^{-1}(j)$ and note that the condition $i\neq 3$ follows from the definition of $j$.  It now follows from 
$\sigma(3)\neq j$ (and $j\neq i$) that $\sigma(3)=i$, and consequently $\sigma(j)=3$.

Computing the factors of $h_{\sigma}$, we have
\begin{align*}
\partial_{i,\sigma(i)}f_{\ell_{\sigma(i)}}&=\partial_{i,j}f_{\ell_{j}}=n_{\ell_{j}}^2x_3\sin(2n_{\ell_{j}}x_1)\sin(2n_{\ell_{j}}x_2),\\
\partial_{j,\sigma(j)}f_{\ell_{\sigma(j)}}&=\partial_{j,3}f_{\ell_3}=n_{\ell_3}\sin(2n_{\ell_3}x_j)\sin^2(n_{\ell_3}x_i)
\end{align*}
and
\begin{align*}
\partial_{3,\sigma(3)}f_{\ell_{\sigma(3)}}&=\partial_{3,i}f_{\ell_i}=n_{\ell_{i}}\sin(2n_{\ell_{i}}x_{i})\sin^2(n_{\ell_{i}}x_{j}).
\end{align*}

Note that $(\ell_1,\ell_2,\ell_3)\in\mathscr{L}$ implies that at least one of the conditions $\ell_i\neq \ell_*$ or $\ell_3\neq \ell_*$ holds.  Suppose first
that both of these conditions hold, i.e. $\ell_i\neq \ell_*$ and $\ell_3\neq \ell_*$.  In this case, we note that $P_i$ is given by 
\begin{align*}
P_i(x_i)=\sin(2n_{\ell_*}x_i)\sin(2n_{\ell_i}x_i)\sin^2(n_{\ell_3}x_i).
\end{align*}
Integrating by parts with respect to $x_i$ in \eqref{eq-14}, it follows that \eqref{eq-14} is bounded by a multiple of
\begin{align}
\frac{n_{\ell_i}^{2/3}}{(n_{\ell_3}n_{\ell_*})^{1/3}}+\frac{n_{\ell_3}^{2/3}}{(n_{\ell_i}n_{\ell_*})^{1/3}} \label{eq-3d-2}
\end{align}

Arguing as in Case $1$ above, we obtain the bound
\begin{align*}
\eqref{eq-3d-2}&\leq k^{-6}
\end{align*}
as desired.

Suppose now that $\ell_i\neq \ell_*$ and $\ell_3=\ell_*$.  In this case, we have
\begin{align*}
P_i(x_i)&=\sin(2n_{\ell_*}x_i)\sin(2n_{\ell_i}x_i)\sin^2(n_{\ell_*}x_i).
\end{align*}
In view of the identity 
\begin{align}
\sin(2n_{\ell_*}x_i)\sin^2(n_{\ell_*}x_i)&=\frac{1}{2n_{\ell_*}}\frac{d}{dx_i}[\sin^4(n_{\ell_*}x_i)],\label{trig-identity}
\end{align}
integration by parts shows that \eqref{eq-14} is bounded by a multiple of
\begin{align}
\frac{n_{\ell_i}^{2/3}}{n_{\ell_*}^{2/3}}\leq \frac{n_{\ell_*-1}^{2/3}}{n_{\ell_*}^{2/3}}=k^{\gamma},\quad \gamma=\frac{2}{3}(3^{3\ell_*-3}-3^{3\ell_*}).\label{eq-3d-3}
\end{align}
The desired conclusion ($\gamma\leq -6$) then follows again from $\ell_*\geq 2$.

Similarly, if $\ell_i=\ell_*$ and $\ell_3\neq \ell_*$, we have
\begin{align*}
P_j(x_j)&=\sin(2n_{\ell_*}x_j)\sin^2(n_{\ell_*}x_j)\sin(2n_{\ell_3}x_j)
\end{align*}
and thus, using integration by parts as above, \eqref{eq-14} is bounded by a multiple of
\begin{align}
(n_{\ell_3}^{2/3})/(n_{\ell_*}^{2/3}),\label{eq-3d-4}
\end{align}
which is again bounded by $k^{-6}$ as desired.

\vspace{0.1in}
\underline{Case 3}: $\ell_{j}=\ell_*$ and $\sigma^{-1}(j)=j$.
\vspace{0.1in}

Set $i=\sigma(3)$, and observe that $i,j\in \{1,2\}$ with $i\neq j$.  It then follows from $\sigma(j)=j$ and $\sigma(3)\neq j$ that $\sigma(j)=3$ and $\sigma(3)=i$.  As in our treatment of Case $2$, we therefore identify the factors of $h_{\sigma}$ as
\begin{align}
\partial_{j,\sigma(j)}f_{\ell_{\sigma(j)}}&=\partial_{j,j}f_{\ell_{j}}=n_{\ell_j}x_3\cos(2n_{\ell_j}x_j)\sin^2(n_{\ell_j}x_j),\label{eq-3-4}\\
\partial_{i,\sigma(i)}f_{\ell_{\sigma(i)}}&=\partial_{i,3}f_{\ell_{3}}=n_{\ell_3}\sin(2n_{\ell_3}x_i)\sin^2(n_{\ell_3}x_j),\label{eq-3-5}
\end{align}
and
\begin{align}
\partial_{3,\sigma(3)}f_{\ell_{\sigma(3)}}&=\partial_{3,i}f_{\ell_{i}}=n_{\ell_i}\sin(2n_{\ell_i}x_i)\sin^2(n_{\ell_i}x_j).\label{eq-3-6}
\end{align}

We again observe that $(\ell_1,\ell_2,\ell_3)\in\mathscr{L}$ implies that at least one of the frequencies $\ell_i$ or $\ell_3$ is not equal to $\ell_*$.  If $\ell_i\neq \ell_*$ and $\ell_3\neq \ell_*$, we use \eqref{eq-3-4}--\eqref{eq-3-6} to write
\begin{align*}
P_j=\cos(2n_{\ell_*}x_j)\sin^2(n_{\ell_3}x_j)\sin^2(n_{\ell_i}x_j)
\end{align*}
Applying integration by parts, this bounds \eqref{eq-14} by a multiple of
\begin{align}
\max\{n_{\ell_3}^{2/3},n_{\ell_i}^{2/3}\}/(n_{\ell_*}^{1/3}).\label{eq-3-7}
\end{align}

In the case $\ell_i\neq \ell_*$, $\ell_3=\ell_*$, we similarly have
\begin{align*}
P_{i}&=\sin^2(n_{\ell_*}x_i)\sin(2n_{\ell_*}x_i)\sin(2n_{\ell_i}x_i)
\end{align*}
which (in view of \eqref{trig-identity}) gives the bound
\begin{align}
\eqref{eq-14}&\leq C(n_{\ell_i}^{2/3})/(n_{\ell_*}^{1/3}),\label{eq-3-8}
\end{align}
while for $\ell_i=\ell_*$, $\ell_3\neq \ell_*$, we have
\begin{align*}
P_{i}&=\sin^2(n_{\ell_*}x_i)\sin(2n_{\ell_3}x_i)\sin(2n_{\ell_*}x_i),
\end{align*}
giving the bound
\begin{align}
\eqref{eq-14}&\leq C(n_{\ell_3}^{2/3})/(n_{\ell_*}^{1/3}).\label{eq-3-9}
\end{align}

Arguing as in Cases $1$ and $2$ above, each of the bounds \eqref{eq-3-7}--\eqref{eq-3-9} are controlled by $k^{-6}$.  This completes the proof. 
\end{proof}

\end{document}